\numberwithin{figure}{section}
\numberwithin{equation}{section}
\numberwithin{table}{section}
\patchcmd{\thebibliography}{\section*}{\section}{}{}
\renewcommand{\refname}{REFERENCES}
\newcommand{\ef}{\end{equation}}
\chardef\bslash=`\\ 
\newcommand*\colvec[3][]{
    \begin{pmatrix}\ifx\relax#1\relax\else#1\\\fi#2\\#3\end{pmatrix}
}
\newtheorem*{thm*}{Theorem}
\newtheorem{lem}{Lemma}[section]
\newtheorem*{lem*}{Lemma}
\newtheorem*{corl*}{Corollary}
\newtheorem{prop}{Proposition}[section]
\newtheorem{prop*}{Proposition}
\theoremstyle{definition}
\newtheorem{defn}{Definition}[section]
\newtheorem{examp}{Example}
\newtheorem*{examp*}{Example}
\newtheorem*{remark*}{Remark}
\newtheorem*{CC*}{Crossover Conjecture}
\newtheorem*{Note*}{Note}
\newtheorem*{defn*}{Definition}
 \theoremstyle{remark}
\renewcommand{\tt}{\tilde{t}}
\newcommand{\ovr}{\overline}
 \renewcommand{\sectionmark}[1]{}
 \newcommand{\id}{\operatorname{id}}
\newcommand{\Rad}{\operatorname{Rad}}
\newcommand{\End}{\operatorname{End}}
\newcommand{\Hom}{\operatorname{Hom}}
\begin{document}

\title{Elementary equivalences for blocks with normal elementary abelian defect group of rank 2} 
\author{Mary Schaps and Zehavit Zvi}

\maketitle

 \begin{abstract}

We consider the effect of performing an elementary equivalence as defined by Okuyama on a group block of form $F(C_p \times C_p)\rtimes C_r$, for a field $F$ of characteristic $p$. If $I=\{0,1,2,\dots r-1\}$ is the set of residues corresponding to the simple modules of $FC_r$, the elementary equivalence is determined by a proper, non-empty subset $I_0 \subset I$, and the corresponding elementary tilting complex is completely determined by $I_0$.

We give a catalog of homogeneous maps between irreducible components of the elementary tilting complex.  When the subset $I_0$ is an interval, we prove that the maps in the catalog are sufficient to describe all homogeneous maps between two irreducible components.

\end{abstract}
\section{INTRODUCTION}

\noindent This work concerns blocks over a field $F$ of odd finite characteristic $p$, with elementary abelian defect group $D=C_p \times C_p$. We restrict to odd characteristic because the case of $C_2 \times C_2$ has been intensively studied.  The defect group $C_p \times C_p$  is the defect group with the next largest rank after the well-studied case of cyclic defect group.

We are interested in the tilting theory
of these blocks, trying to apply some of the insights gathered from the cyclic defect case \cite{R1,SZ2,SZ}. 
In that case, every block can be obtained from a block with normal defect group by a sequence of elementary tiltings known as mutations.  Okuyama \cite{O} found some important group blocks as the result of a sequence of more general elementary equivalences in the case $p=3$. The Brou\'e conjecture for principal blocks when $p=3$ was eventually proven by Koshitani and Kunugi in \cite{KK} but is still open for larger primes. Unfortunately, one of Okuyama's central lemmas is less useful for primes $p \geq 5$, so his method had not found ready application to other primes.

A study of the blocks of  subgroups of the groups in the Atlas of Simple Groups \cite{C} shows that most of the abelian defect groups are either cyclic or of the form $D=C_p \times C_p$, and all have a property we will define later as being balanced. Furthermore, the Brauer correspondents of these blocks are usually of the form $D \rtimes C_r$ or $D \rtimes D_n$. In this work, we are taking up the first case, $D \rtimes C_r$ and asking what sort of algebras can be reached by a single elementary equivalence, as a function of the prime $p$ and the number $r$ of simple modules in the original block of cyclic defect.  

\section{DEFINITIONS AND NOTATION}

\subsection{Blocks with normal defect groups}
\label{sec: Blocks with normal defect group}

The blocks with normal defect groups were studied in depth by several authors \cite{Lan}, \cite{RR}, \cite{Sw}  and also \cite{K}. We will review that part of the theory which we need. By Brauer's First Main Theorem, we know that there is a one-to-one correspondence between the blocks $B$ of $FG$ with defect group $D$, and the blocks $b$ of $FN_G(D)$ with defect group $D$ \cite{CR}. In order to investigate the relationship between $B$ and $b$ we must know more about the structure of the blocks of $FN_G(D)$. We restricted ourselves to Brauer correspondents which are Morita equivalent as blocks to $F[(C_p \times C_p)\rtimes H]$, with $H=C_r$ for $r$ prime to $p$. We will show that we can choose a basis for the radical of $F[D]$ which are eigenvectors for  the generator of $C_r$ and we will assume that the eigenvalues of the generators of this radical are inverse to each other.

\subsection{The quiver of a block}
\label{sec:The quiver of a local block}
Let $A$ be a $K$-algebra, where $K$ is an algebraically closed field with $A/\Rad(A) \xrightarrow{\sim}\bigoplus M_{n_{i}}(K)$. Let $f_1, \dots, f_t$ be a set of primitive idempotents, one from each matrix block, and set 
\[
n'_{ij}=\dim (f_i(J/J^2)f_j)
\]
for $J=\Rad(A)$.
\begin{defn} The \textbf{quiver} of $A$ is the directed graph with $t$ vertices, where $t$ is the number of simple modules, and $n'_{ij}$ arrows from $j$ to $i$. Then $n'_{ij}$ is the number of irreducible morphisms from $j$ to $i$.
\end{defn}

In \cite{S}, results were quoted showing that:
\begin{enumerate}
	\item The block idempotents of $FH$ embed in $FG$.
	\item $\Rad(FG)$ is generated by $J=\Rad(FD)$.
	\item The action of $H$ on $D$ induces an action of $H$ on $J/J^2$ with character $\chi$, and there is an $H$-submodule $N$ of $J$ isomorphic to $J/J^2$. 
	\item The quiver of $G$ is the McKay graph $D(H, \chi)$ for the character $\chi$ of $H$.
\end{enumerate}

Gerstenhaber and Schaps go on to show in \cite{GS} that the $H$-module $J/J^2$ can be embedded as a submodule of $J$ of dimension $d$ over $F$. If $H$ is abelian, it decomposes as a direct sum of one-dimensional submodules, so that we can find a basis for $J$ of idempotents for the action of $H$.

\subsection{The McKay graph}
\label{sec:The McKay graph}
In \cite{M}, McKay introduced a graph that he called the representation graph and which later was called the McKay graph by Schaps, Shapira and Shlomo in \cite{SSS}.

Let $H$ be a finite group, and let $F$ be a field of characteristic zero or of positive characteristic $p$,  $ p \nmid \left|H\right|$. Let $K$ be an extension of $F$ which is a splitting field for $H$. Let the character table of $H$ be considered as having entries in $K$. Let $\chi$ be a character of $H$, with kernel $N$. If $\theta_1,\dots,\theta_t$ is the complete set of irreducible characters of $H$, and $\chi \cdot\theta_j$ is defined by pointwise multiplication of the values for the various conjugacy classes, define the natural number $n_{ij}$ as the multiplicity of the constituent $\theta_i$ in the product character $\chi \cdot \theta_j$. Since the $\theta_i$ form an orthonormal basis, with respect to an inner product ( -, -), this is given by $n_{ij}=(\theta_i,\chi\cdot\theta_j$).
\begin{defn} If $H$ is a finite group, $K$ is a sufficiently large field of characteristic prime to $\left|H\right|$, and $\chi$ is a $K$ character of $H$, the \textbf{McKay graph} $D$($H$,$\chi$) is the directed graph with
	\begin{enumerate}
		\item[(i)] $t$ vertices labelled by the irreducible characters $\theta_1,\dots,\theta_t$ of $H$.
		\item[(ii)] For each pair $(\theta_i$,$\theta_j)$ of characters, $n_{ij}$ arrows from $\theta_j$ to $\theta_i$, where 
		\[
		n_{ij}=(\theta_i,\chi \cdot\theta_j).
		\]
	\end{enumerate}
\end{defn}
In (\cite{SSS}, \S 3), it was shown that for a graph algebra of the form  $(C_p^d) \rtimes H$, where $ p \nmid \left|H\right|$, the quiver was given by the McKay graph, and a method was also given in \cite{SSS} for writing down the relations on the Morita equivalent basic algebra. 

\subsection{The group algebra $F[(C_p\times C_p) \rtimes C_r]$}
\label{The group algebra}

We now restrict to the case most common for non-cyclic defect, the case of $d=2$ and $H=C_r$, with $r$ not divisible by $p$.  By the results we quoted from \cite{GS}, we can find two generators $x$ and $y$ for the radical $J$ which are eigenvectors for the action of $C_r$.
There are many ways to choose a pair of eigenvalues of the action of $C_r$, but what usually occurs in the examples is that there is an automorphism that interchanges $x$ with $y$, and the generator of the $C_r$ with its inverse, which means that the idempotent of $y$ is the inverse of the idempotent of $x$, so that the action of $C_r$ on $xy$ is trivial.

\begin{defn} Let $G=(C_p \times C_p) \rtimes C_r$, and let $x$ and $y$ be generators of $J$ which are eigenvectors for the action of $C_r$. We will say that the action is \textit{balanced} if the eigenvalue of the action of $C_r$ on $x$ is the inverse of the eigenvalue of $y$.
	\end{defn}

\begin{examp}
	\label{example 554_2}
	If $H$ is $C_4=\langle a \rangle$,  $G=(C_5 \times C_5) \rtimes C_4$, where the generator $a$ of $C_4$ acts on the generators $c$ and $d$ of $C_5 \times C_5$ by sending $c$ to $c^2$ and $d$ to $d^3$. Let $i \in F$ be a primitive 4th root of unity, with irreducible characters  $\theta_j(a^k)= i^{(j-1)k}$, where $j=1, \dots, 4$ and $k=0, \dots, 3$. Let $e_u$, $u=1,\dots, 4$, be the idempotents of $KC_4$. Set $\chi=\theta_2+ \theta_4$.  
	Then, 
	\[
	\chi \cdot\theta_j= \theta_{j+1}+\theta_{j-1}
	\]
	where the indices are taken mod 4. Since $2$ and $3$ are primitive fourth roots of unity modulo $5$,
we can choose the generators as follows
\begin{align*}
	x &= c+3c^2+2c^3+4c^4 \\
	y &= d+2d^3+3d^2+4d^4. \\
\end{align*}
\noindent Then $a^{-1}xa=c^2+3c^4+2c+4c^3=2x$ and 
$a^{-1}ya=d^3+2d^4+3d+4d^2=3y$
	
	The quiver of $FG$ is given by
	\[
	\quad
	\xymatrix{
		{^1 \bullet} \ar@/^/[r] \ar@/^/[d] & {\bullet^2} \ar@/^/[l] \ar@/^/[d]\\
		{_0 \bullet} \ar@/^/[r] \ar@/^/[u] & {\bullet_3} \ar@/^/[l] \ar@/^/[u]
	}
	\]
	with the relations:
	
	\begin{align*}
		x_{u(u+1)}&=e_uxe_{u+1}\\
		y_{u(u-1)}&=e_uye_{u-1}\\
		x&=x_{01}+x_{12}+x_{23}+x_{30}\\
		y&=y_{10}+x_{03}+x_{32}+x_{21}\\
		xy&=yx\\
		x^5&=0, y^5=0\\
	\end{align*}
	
\end{examp}

\subsection{The structure of the projective indecomposable modules $P_t$ for $F[(C_p \times C_p) \rtimes C_r]$}
\label{The structure of projective modules}
Since $C_r$ is abelian, the simple modules of $FG$, which correspond to the simple modules of $FC_r$, are one-dimensional. They bijectively correspond to the idempotents of $FC_r$. Using the fact that powers of $\xi$ and $a$ can be taken modulo $r$, we recall that the central idempotents of $FC_r$ are given by
\begin{align*}
	e_t&= \frac{1}{r} \sum_{i=0}^{r-1} \xi^{-it} a^i, t=0, \dots, r-1.\\
\end{align*}
We consider one particular idempotent $e_t$ and its projective module $P_t$. We first consider the vector space $S_t=\langle e_t \rangle $, and show that it is a one-dimensional $C_r$-module under multiplication in $FC_r$.
\begin{align*}
	a \cdot e_t&=\frac{1}{r} \sum_{i=0}^{r-1} \xi^{-it} a^{i+1} \\
	&= \xi^{t} \cdot \frac{1}{r} \sum_{i=0}^{r-1} \xi^{-(i+1)t} a^{i+1}\\
	&= \xi^{t} e_t.\\
\end{align*}
By \S2.3, for $A=FG$, the projective modules are $P_t=FGe_t$. If $J=\Rad(FD)$ and $I=\Rad(FG)=JFG$ then $\Rad(P_t)=I \cap P_t$ and $S_t=P_t/ \Rad(P_t)$ is the simple module corresponding to $e_t$.

We have assumed that the characteristic of $F$ is $p$. Since $D  \xrightarrow{\sim}C_p \times C_p$, $FD= F[x, y]/(x^p, y^p)$. Since $C_r$ is an abelian group and $p \nmid r$ we set $FC_r= \oplus Fe_t$. The structure of these blocks is treated in \cite{Lan}, \cite{Sw}, \cite{K}, \cite{RR}. Our notation follows that of \cite{GS} in the simpler case where there is no inertial subgroup. We know that the dimension of $FD$ is $p^2$ and there is a basis $x^iy^j$. The module can be represented as a diamond of length $p$ on each side.

\subsection{Derived equivalence}

\noindent We fix an abelian category $\mathcal C$ and we denote by  $Ch(\mathcal C)$
the category of cochain complexes of objects of $\mathcal C$. 
The differentials of the complex are 
morphisms $\left\{ d_{n}:C_{n}\rightarrow C_{n+1}\right\} $
satisfying  $d_{n+1} \circ d_{n}=0$. For any given complex $C^\bullet$, the 
cocycles  $Z^{n}(C^\bullet)$, coboundaries  $B^{n}(C^\bullet)$, and  cohomology modules 
$H^{n}(C^\bullet)=Z^{n}(C^\bullet)/B^{n}(C^\bullet)$ are defined as in such standard texts as \cite{W}.

~
\noindent \begin{defn}

\noindent A morphism $f_\bullet:C^\bullet\rightarrow D^\bullet$ of cochain complexes is called
a \textit{quasi-isomorphism }if the induced maps $H^{n}\left(C^\bullet\right)\rightarrow H^{n}(D^\bullet)$
are all isomorphisms.

\noindent \end{defn}

\noindent \begin{defn}

\noindent A cochain complex $C$\textbf{ }is called \textit{bounded}\textbf{
}if almost all the $C^{n}$ are zero. The\textit{ }complex $C$\textbf{ }is \textit{bounded
below}\textbf{ }if there is a bound $a$\textbf{ }such that $C^{n}=0$
for all $n<a$. The cochain complexes which are partially or fully bounded
form full subcategories $Ch^{b},\thinspace Ch^{+},\thinspace Ch^{-}$of
$Ch(\mathcal C)$.

\noindent \end{defn}

\noindent \begin{defn}

\noindent The \textit{derived category} of an abelian category $\mathcal C$ is the category obtained from  $Ch(\mathcal C)$ by adding
formal inverses to all the quasi-isomorphisms between chain complexes.  It is called the \textit{bounded derived category} and denoted  $D^{b}(\mathcal C)$ if we consider only bounded complexes.
A \textit{derived equivalence} between two abelian categories is an equivalence of categories between their derived categories.
\noindent \end{defn}

\noindent \begin{defn}
\noindent We say that a chain map $f:C\rightarrow D$ is \textit{null
homotopic} if there are maps $s_{n}:C^{n}\rightarrow D^{n+1}$ such
that $f=ds+sd$.

\noindent \end{defn}

\noindent \begin{defn}

\noindent Two chain maps $f,g:C\rightarrow D$ are \textit{chain homotopic}
if their difference $f-g$ is null homotopic, in other words, if $f-g=sd+ds$
for some $s$. The maps $\left\{ s_{n}\right\} $ are called a \textit{homotopy}
 from $f$ to $g$. We say that $f:C\rightarrow D$ is a
\textit{homotopy equivalence} if there is a map $g:D\rightarrow C$
such that $g\circ f$ is chain homotopic to the identity map $id_{C}$
and $f\circ g$ is chain homotopic to the identity map $id_{D}$.
\end{defn}

\subsection{Tilting complexes}

\noindent We now consider complexes $T$ of projective left modules over an associative ring $R$. The
notation $T[n]$ denotes the complex that is isomorphic to $T$ as
a module but in which the gradation has been shifted $n$ places to
the left and the differential is the shift of the differential multiplied
by $(-1)^{n}$.

~

\noindent For any ring $R$, let $D^{b}(R)$ be the derived category
of bounded complexes of $R$-modules.

\noindent \begin{defn}\label{TC}Let $R$ be a Noetherian ring. A bounded complex
$T$ of finitely generated projective $R$-modules is called a \textit{tilting
complex} if:

\noindent \renewcommand{\labelenumi}{(\roman{enumi})}
\begin{enumerate}
\item \noindent Hom\textsf{$_{D^{b}\left(R\right)}\left(T,T[n]\right)=0$}
whenever\textsf{ $n\neq0$}.
\item \noindent For any indecomposable projective\textsf{ $P$}, define
the stalk complex to be the complex\textsf{ $P^{.}:0\rightarrow P\rightarrow0$.}
Then every such\textsf{ $P^{.}$ }is in the triangulated category
generated by the direct summands of direct sums of copies of\textsf{
$T.$}
\end{enumerate}
 \end{defn}

\noindent A complex $T$ satisfying only (i) is called
a \textit{partial tilting complex.}

\begin{singlespace}
\noindent \begin{defn}Fix an abelian category $\mathcal C$ and the
category of cochain  complexes $Ch(\mathcal{C})$. For two complexes
$X$\textbf{ }and $Y$\textbf{ }denote by $Z(X,Y)$ the set of morphisms
from $X$\textit{ }to $Y$\textbf{ }which are homotopic to zero. The
collection of all $Z(X,Y)$\textbf{ }forms a subgroup of Hom$_{C(\mathcal{C})}(X,Y)$.
Denote by $K(\mathcal{C})$\textbf{ }the\textbf{ }quotient category,
i.e.{\large{} }$K(\mathcal{C})$ is the category having the same objects
as $Ch(\mathcal{C})$ but with morphisms
 \begin{center} 
{Hom${{}_{K}}_{(\mathcal{C}}{_{)}(X,Y)=}$ Hom${{}_{Ch}}$$_{(\mathcal{C}}$${_{)}(X,Y)/Z(X,Y)}$,}
\end{center}
so that two homotopic maps are identified.
The quotient category $K(\mathcal{C})$\textbf{ }is called the  \textit{homotopy
category},  and a homotopy equivalence between 
complexes is an isomorphism in the homotopy category. For an abelian category ${\mathcal{C},}$ $K^{-}(\mathcal{C})$
is the homotopy category of right bounded complexes in ${\mathcal{C}}$,
and similarly one can define $K^{+}(\mathcal{C})$.\end{defn}
\end{singlespace}

~

\noindent The derived category is not an abelian category, but it is a triangulated category. 
The original theory of tilting concerned modules called tilting modules. 
Happel, in \cite{H} showed that if there was a tilting between two algebras  $\Lambda$ and $\Gamma$,
it induced a functor which was an equivalence of their derived bounded categories.
Rickard \cite{R1} then proved a converse when tilting modules were replaced by tilting complexes,
namely, that  there is a tilting complex $T$ over $\Lambda$ with endomorphism
ring End$_{D^{b}\left(\Lambda\right)}(T)^{op}\cong\Gamma$.

~

\noindent Let
$\Lambda-Proj$ be the abelian category of projective $\Lambda$-modules.
Let $Sum-T$ be the additive category of all the direct sums of copies
of $T$ (as a full subcategory of $K^{-}(\Lambda-Proj))$.
From Lemma 3.3.1 of \cite{KZ2}  we have that $K^{-}(\Gamma-Proj)$
is equivalent to $K^{-}\left(Sum-T\right)$ as triangulated categories.
Thus, every tilting complex can be transformed into a projective complex, 
and to calculate the required endomorphism ring of the 
tilting complex, it will be sufficient to find chain maps between the 
components of the tilting complex which are not homotopic to zero.

\subsection{Elementary equivalences}
\label{sec:ELEMENTARY EQUIVALENCES}
Of particular importance in this work are the elementary equivalences, which go back to the work of Rickard \cite{R2} and
Okuyama \cite{O}. 
\noindent \begin{defn}\label{Elementary}
	Let $A$ be a finite-dimensional basic algebra, with projective modules $P_j, j \in I$. 

Fix a subset   $I_0 \subset I$.
For any $i\in I$ we define a complex by 

$T_{i}=\begin{cases}
\begin{array}{cccc}
(0th) & \, & (1st) & \,\\
P_{i} & \longrightarrow & 0 & \:\:i \in I_0\\
Q_{i} & \overset{\pi_{i}}{\longrightarrow} & P_{i} & \:\:i \in I-I_0
\end{array} & \,\end{cases}$

\noindent where $Q_{i}\overset{\pi_{i}}{\longrightarrow}P_{i}$ is
a minimal projective presentation of the minimal kernel with quotient having only simples from $I-I_0$.
Now we define $T:=\varoplus_{i\ensuremath{{ \in I}}}T_{i}$.
The algebra \textit{ elementary eqivalent  } of $A$ through $T$ is is $A'\cong (\End_{D^{b}(A)}T)^{op}$.
\end{defn}

\subsection{Homogeneous maps}

To each $i \in I$,
we can associate an idempotent $\tilde{e}_{i}$ with $1_{A}$=$\stackrel[i=1]{e}{\sum}$$\tilde{e}_{i}$.

\begin{defn}
 \label{sec:Definition 3.7}
For $i,j \in\{0, \dots, r-1\}$ let
$P_i=Ae_i
P_j=Ae_j$.
The map 
$ f:P_i \to P_j$
 is \textbf{homogeneous} if 
$ f(e_i)=\epsilon x^{k}y^{\ell}e_j$
for integers  $0\leq k,\ell\leq p-1$, $\epsilon \in F$.
\end{defn}
\begin{defn}
 \label{sec:Definition 3.8} 
Let 
\begin{align*}
 f:& \bigoplus_{i= 1}^r P_i \to  \bigoplus_{j= 1}^r P_k.\\ 
\end{align*}
Then $f$ will be called  \textbf{homogeneous} if every component in $\bigoplus_{ i,k= 1}^rHom( P_i, P_k)$ is a homogeneous map
$f_{ik}:P_i \to P_k$
as in Definition \ref{sec:Definition 3.7}.
\end{defn}
\begin{defn} 
Let $l^{\bullet}$ be the chain map:
\[\xymatrix@C=70pt@R=50pt{\bigoplus_{i= 1}^m P_{\mu_i} \ar@{->}[r]^{\{d_{it}\}} \ar@<-0.2ex>@{->}[d]_-{\{b_{ij}\}} & \bigoplus_{t= 1}^k P_{\kappa_t} \ar@<0.2ex>@{->}[d]^-{\{f_{tu}\}} \\   \bigoplus_{j= 1}^n P_{\nu_j} \ar@{->}[r]^{\{c_{ju}\}} &  \bigoplus_{u= 1}^{\ell} P_{\lambda_u}} \]
We say that $l^{\bullet}$ is \textbf{homogeneous} if every vertical and horizontal map is homogeneous as in Definition \ref{sec:Definition 3.8} , and for every pair $(i,u)$ and for all $j$ and $t$ we get 
\begin{align*}
\sum^{k}_{t=1} f_{tu} \circ d_{it} = \sum^{n}_{j=1} c_{ju} \circ b_{ij} \\
\end{align*}

\end{defn}
\begin{defn} 
A tilting complex  $T=\bigoplus_{ i= 1}^r T_i$ over $A$ is called homogeneous if each component of the differentials is $\pm1$ times a monomial in $x,y$ times the appropriate idempotent. 
\end{defn} 

\section{MAPS BETWEEN COMPONENTS OF AN ELEMENTARY TILTING COMPLEX}
\label{sec:catalog of maps}

In order to describe the result of acting on the group algebra $A=F[C_p \times C_p)) \rtimes C_r]$ by an elementary equivalence, we must calculate the endomorphism ring of the tilting complex  $T=\bigoplus_{ i= 1}^r T_i$. To simplify all calculations, we will assume that the tilting complex $T$ is homogeneous, a condition which can be achieved using Lemma \ref{homogeneous map lem} below. 

 Let $I = \{0,1,2 \dots , r-1\}$ be arranged in a circle in the clockwise direction. Since $I_0$  is non-empty, removing the points of $I_0$ separates the circle into a finite number of intervals $J_1, \dots ,J_{m}$ which we will call \textit{arcs}. For a particular $J_i$, where $1\leq i \leq m$, we let $u_i$ be the first element of $I_0$ in the counter-clockwise direction and let $v_i$ be the first element of $I_0$ in the clockwise direction. If $I_0$ contains a single point $w$, we will have $m=1$ and $u_1=v_1=w$. 

We denote the $r$-residue of $n \in \mathbb{Z}$ by $\bar{n}$. Any $t \in I-{I_0}$ belongs to an arc $J_i=
[\overline{u_i+1}, \dots, \overline{t-1}, t, \overline{t+1}, \dots, \overline{v_i-1}]$ with the understanding that $0$ follows $r-1$. The arc $J_i$ is preceded by an interval $I_0^i=[\ovr{v_{i-1}},\ovr{v_{i-1}+1},\dots,\ovr{u_i-1},\ovr{u_i}]\subseteq I_0$ and followed by the interval $I_0^{i+1}$.

\begin{defn}
	\label{sec:Definition in} 
	If $t,\tilde{t} \in I$ both lie in one of the intervals $I_0^i$,
	$I_0^{i+1}$ or $J_i$, we will write $t \preceq \tilde{t}$ to indicate  $\tilde{t}$ either equals $t$ or is subsequent to $t$ in the interval.
\end{defn}

\begin{lem}\label{short} If $t,\tilde{t} \in I$ both lie in $J_i$ and $s$ lies in $I_0^i$, then
	\[
	\overline{\tilde{t}-t}+ \overline{t-s}=\overline{\tilde{t}-s}.
	\]
	Similarly, if $s \in I_0^{i+1}$, then
	\[
	\overline{\tilde{t}-t}+ \overline{s-\tilde{t}}=\overline{s-t}.
	\]	 
\end{lem}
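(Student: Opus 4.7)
The plan is to translate the modular identity into an ordinary additive identity between integers by choosing integer lifts of $s$, $t$, $\tilde{t}$ that respect the clockwise cyclic order on $I$. I read the lemma under the convention $t\preceq\tilde{t}$ of Definition~\ref{sec:Definition in}, which is the natural setting in which all three quantities are compatible.

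First, I would use the structure of the decomposition of $I$: $I_0^i$ is immediately followed in the clockwise direction by $J_i$, which is in turn followed by $I_0^{i+1}$, and these intervals are disjoint subarcs of the circle $I$. In particular, $I_0^i\cup J_i$ fits inside a single clockwise arc of length $|I_0^i|+|J_i|\leq r$. This lets me pick integer representatives $\hat{s},\hat{t},\hat{\tilde{t}}\in\mathbb{Z}$ with $\hat{s}\leq\hat{t}\leq\hat{\tilde{t}}$ and $\hat{\tilde{t}}-\hat{s}<r$, simply by reading off the positions along this clockwise arc starting from any chosen lift of $s$.

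Next, because each of $\hat{t}-\hat{s}$, $\hat{\tilde{t}}-\hat{t}$, $\hat{\tilde{t}}-\hat{s}$ then lies in $\{0,1,\dots,r-1\}$, they coincide with the canonical representatives $\overline{t-s}$, $\overline{\tilde{t}-t}$, $\overline{\tilde{t}-s}$ of the corresponding residues. The first identity therefore collapses to the trivial integer statement $(\hat{\tilde{t}}-\hat{t})+(\hat{t}-\hat{s})=\hat{\tilde{t}}-\hat{s}$.

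The second identity is handled symmetrically: $J_i$ is followed clockwise by $I_0^{i+1}$, so for $s\in I_0^{i+1}$ I would choose lifts with $\hat{t}\leq\hat{\tilde{t}}\leq\hat{s}$ and $\hat{s}-\hat{t}<r$; the same algebraic step then gives the claim. The only real obstacle is the bookkeeping that justifies the lifts staying within one period, which reduces to the observation that $I_0^i\cup J_i$ (respectively $J_i\cup I_0^{i+1}$) is a single clockwise arc of length at most $r$, so that no modular wraparound occurs in the three differences being compared.
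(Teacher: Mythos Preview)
Your proof is correct and follows essentially the same approach as the paper: both arguments choose integer lifts of $s,t,\tilde{t}$ lying within a single window of length at most $r$ (the paper does this by shifting the origin to $v_{i-1}$, you by picking lifts along the clockwise arc), so that each relevant difference equals its canonical residue, and the identity reduces to the trivial integer equation $(\hat{\tilde{t}}-\hat{t})+(\hat{t}-\hat{s})=\hat{\tilde{t}}-\hat{s}$. You are also right to make the assumption $t\preceq\tilde{t}$ explicit; the paper's proof uses it tacitly when writing $s'<t'\leq\tilde{t}'$.
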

\begin{proof}
	Using the notation above, $I_0^i \cup J_i$ can be written as an ordered interval of length less than or equal to $r$. If $v=v_{i-1}$ is the first element of $I^i_0$, then we renumber $I$ by replacing each 
	$k \in I$ by $k'=\overline{k-v}$.  If we identify $s',u',\tilde{t}'$, which are residues, with the corresponding integers between $0$ and $r-1$, then we have $s' < t' \leq \tilde{t}'$, so that
	$t'-s'=\overline{t'-s'}$,$\tilde{t}'-t'=\overline{\tilde{t}'-t'}$ and $\tilde{t}'-s'=\overline{\tilde{t}'-s'}$. Since the renumbering is just a cyclic permutation and does not affect the residue of the differences between two residues, and since, as integers,
	\[
	(\tilde{t}'-t')                                 +(t'-s')=\tilde{t}'-s',
	\]
	we obtain the desired result.
\end{proof}

In this article, we fix one arc $J_i$ and  we determine all the irreducible maps  
internal to the sets $I_0^i \cup J_i \cup I_0^{i+1}$. If $m=1$, there is just a single arc $J_1$, and a single complementary interval $I_0$. Then the maps we will construct will generate the entire endomorphism ring. We will leave to a sequel the case where $m>1$ . For simplicity of notations, we will write $J=J_i$, $u=u_i, v=v_i$.

We first determine the maps between the projective modules in the block of normal defect group.

\begin{lem}
	\label{homogeneous map lem}
	We have a group block $F[(C_p\times C_p)\rtimes C_r]$ with indecomposible projective modules $P_0, P_1, \dots P_{r-1}$. Suppose we are given a homogeneous map $P_i \to P_k$, for $0\leq i,k\leq r-1$. 
	\begin{enumerate}
		\item
		If $\overline{k-i}<p$ then there are homogeneous maps $f(e_i)=x^{\overline{k-i}+hr}(xy)^{q}$, where 
		$h$ is an integer satisfying $0 \leq h \leq \frac{p-1-\overline{k-i}}{r}$ and $0\leq q \leq p-1-(\overline{k-i}+hr)$.	
		
		If $\overline{i-k}<p$ then there are homogeneous maps $f(e_i)=y^{\overline{i-k}+hr}(xy)^{q}$, where 
		$h$ is an integer satisfying $0 \leq h \leq \frac{p-1-\overline{i-k}}{r}$ and $0\leq q \leq p-1-(\overline{i-k}+hr)$.
		\item
		The homogeneous maps give a basis for the homomorphisms from $P_i \to P_k$ thus we get
		
		\[
		f(e_i)=1_{\overline{k-i}<p} \sum_{h=0}^{\lfloor  \frac{p-1-\overline{k-i}}{r}\rfloor} \sum_{q=0}^{\lfloor p-1-(\overline{k-i}+hr) \rfloor}  a_q^h \cdot x^{\overline{k-i}+hr}(xy)^{q}e_k
		\]
		
		\[
		+1_{\overline{i-k}<p} \sum_{h=0}^{\lfloor  \frac{p-1-\overline{i-k}}{r}\rfloor} \sum_{q=0}^{\lfloor p-1-(\overline{i-k}+hr) \rfloor}  b_q^h \cdot y^{\overline{i-k}+hr}(xy)^{q}e_k.
		\]
		\noindent Note that if $r>p$ then $h=0$.
	\end{enumerate}
\end{lem}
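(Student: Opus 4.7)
The plan is to use the natural isomorphism $\Hom_{FG}(P_i,P_k)\cong e_i(FG)e_k$ given by $f\mapsto f(e_i)$; under it a homogeneous map corresponds exactly to a scalar multiple of a monomial $x^ay^be_k$ lying in this subspace, so part (2) amounts to listing a monomial basis of $e_i(FG)e_k$, and part (1) is then immediate. The key computation is how the central idempotents of $FC_r$ commute past the generators of $J$. Under the balanced action of Subsection \ref{The group algebra}, write $a^{-1}xa=\xi x$ and $a^{-1}ya=\xi^{-1}y$ with $\xi$ a primitive $r$th root of unity; then $a^jx=\xi^{-j}xa^j$, and a direct substitution into $e_t=\frac{1}{r}\sum_{j=0}^{r-1}\xi^{-tj}a^j$ yields
\[
e_ix=xe_{\overline{i+1}},\qquad e_iy=ye_{\overline{i-1}}.
\]
Iterating, $e_ix^ay^be_k=x^ay^be_{\overline{i+a-b}}e_k$, which is nonzero exactly when $a-b\equiv k-i\pmod r$.

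Since $\{x^ay^b:0\le a,b\le p-1\}$ is an $F$-basis of $FD=F[x,y]/(x^p,y^p)$, restricting to those monomials that satisfy the above congruence and multiplying by $e_k$ produces a monomial basis of $e_i(FG)e_k$. To put it in the form stated I would split according to whether $a\ge b$ or $a\le b$. In the first case, set $h=(a-b-\overline{k-i})/r\in\mathbb Z_{\ge0}$ and $q=b$, so that $x^ay^b=x^{\overline{k-i}+hr}(xy)^q$; the constraints $b\ge0$ and $a\le p-1$ translate into $0\le q\le p-1-(\overline{k-i}+hr)$ and $0\le h\le(p-1-\overline{k-i})/r$, which forces $\overline{k-i}<p$ for any solution to exist. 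The case $a\le b$ is symmetric under $x\leftrightarrow y$, $i\leftrightarrow k$, producing the second family $y^{\overline{i-k}+hr}(xy)^q$ with the analogous bounds and indicator $1_{\overline{i-k}<p}$. When $r>p$ the inequality on $h$ permits only $h=0$, giving the parenthetical remark.

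The only subtle point, which I would flag as the main obstacle to a fully clean statement, is the overlap between the two families when $k=i$: both then contain the diagonal monomials $(xy)^q$ with $0\le q\le p-1$. I would handle this by reading the second sum as the contribution with $a<b$ strictly, so that the term $\overline{i-k}+hr=0$ is omitted whenever the first sum already covers it; this is a bookkeeping convention rather than a substantive difficulty, and in any case $\overline{k-i}+\overline{i-k}=r$ for $k\ne i$, so once $r>p$ at most one of the two families is nonempty. Linear independence of distinct monomials in $FD$ then makes the resulting collection automatically a basis of $e_i(FG)e_k$, proving part (2) and hence part (1).
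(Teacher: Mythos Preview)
Your argument is correct and follows essentially the same route as the paper: identify $\Hom(P_i,P_k)$ with $e_i(FG)e_k$, compute how the idempotents shift past $x$ and $y$ (the paper phrases this as ``multiplying $e_k$ by $x$ gives $e_{k-1}xe_k$'', which is your relation $e_ix=xe_{\overline{i+1}}$ read from the other side), and then enumerate the monomials $x^ay^b$ with $a-b\equiv k-i\pmod r$ and $0\le a,b\le p-1$. Your treatment of the $i=k$ overlap is in fact more explicit than the paper's, which simply notes the diagonal maps $(xy)^q$ without spelling out a convention to avoid double counting in the displayed sum.
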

\begin{proof}
	\begin{enumerate}
		\item
		By standard arguments of representation theory, the image of $e_i$ in $P_k$ must be a linear combination of basis elements $e_ix^sy^te_k$. Starting with $e_k$, if we multiply by $x$ we get a basis element $e_{k-1}xe_k$. If we repeat this $s$ times we get $e_{k-s}x^se_{k}$. The first time that $e_{k-s}=e_i$ will be when $s=\overline{k-i}$. This can only occur if $\overline{k-i}<p$; otherwise we get $0$. If we continue multiplying $h$ times by $x^r$ we keep on returning to $e_i$ as long as $s=\overline{k-i}+hr<p$. Multiplication by $y$ is dual, giving the elements $e_{k+t}y^te_k$. For the first time that $e_{k+t}=e_i$ is when $t=\overline{i-k}$ and this can only occur if $\overline{i-k}<p$. If we continue multiplying $h$ times by $y^r$ we keep on returning to $e_i$ as long as $t=\overline{i-k}+hr<p$.
		
		If $e_ix^se_k \neq 0$ then also $e_ix^s (xy)^qe_k\neq 0$ as long as $s+q \leq p-1$. Dually if $e_iy^te_k \neq 0$ then also $e_iy^t (xy)^qe_k\neq 0$ as long as $t+q \leq p-1$. Note that if $i=k$ there are maps $f(e_i)= \sum_{q=0}^{p-1}  c_q (xy)^{q}e_k$.
		
		\item
		The general homomorphism is a linear combination of the homogeneous maps we found in $1$. We added the two characteristic functions  $1_{\overline{k-i}<p}, 1_{\overline{i-k}<p}$ to indicate when part of the sum is zero.
		
	\end{enumerate}
\end{proof}

We recall from Definition \ref{Elementary} that the irreducible components $T_i$ are of two different types dependent on $i$.  If $i \in I_0$, then $T_i$ is a stalk complex, whereas if $i \in I-I_0$, it is a complex with nonzero components in two degrees. We will be working in the homotopy category of complexes of projective modules and we will denote $T_i$ by $P'_i$ when regarded as an element of the homotopy category. What we will do is propose a catalog of irreducible maps between the various $P'_i$, and then demonstrate that a general homogenous map from $P'_i$ to $P'_j$ can be written as a composition of maps in the catalog.

\subsection{Catalog of intra-arc maps}\label{catalog of intra-arc maps}

We now list the irreducible maps from $ P'_t$ to $P'_{t'}$ for $t,t'$ in the interval 
$I_0^i \cup J_i \cup I_0^{i+1}$. The maps will generally be described by powers of $x$ and $y$, but we will also need the following special maps in the catalog:

\begin{defn} \label{Def. 42}
Let $t \in J$ in cyclic order, then the chain maps $(\xi_1,x)$, $(\xi_2,y)$ are as follows
\[
\begin{array}{cc}
\xymatrix@C=70pt@R=30pt{
P_u \oplus P_v \ar[d]\ar@{->}[d]_{\xi_1={\begin{pmatrix} id &0 \\0 & xy \end{pmatrix} }}  \ar[r]^{\begin{pmatrix} x^{\ovr{t-u}} \\ y^{\ovr{v-t}} \end{pmatrix}}   & P_t \ar[d]^x \\
P_u \oplus P_v  \ar[r]^{\begin{pmatrix} x^{\ovr{t-u}+1} \\ y^{\ovr{v-t}-1} \end{pmatrix}}   & P_{t+1} \\
}
&
\xymatrix@C=70pt@R=30pt{
P_u \oplus P_v   \ar[r]^{\begin{pmatrix} x^{\ovr{t-u}} \\ y^{\ovr{v-t}} \end{pmatrix}}   & P_t  \\
P_u \oplus P_v \ar[u]^{\xi_2={\begin{pmatrix} xy &0 \\0 & id \end{pmatrix} }} \ar[r]^{\begin{pmatrix} x^{\ovr{t-u}+1} \\ y^{\ovr{v-t}-1} \end{pmatrix}}   & P_{t+1} \ar[u]_y \\
}
\end{array}
\]
\end{defn}
\begin{itemize}
\item \textbf{A} Adjacent maps. \\

For each arc $J$, as defined at the beginning of $\S \ref{sec:catalog of maps}$, there is a sequence of maps, whose properties depend on the relative sizes of $r$ and $p$ and the location of $I_0 \subset I$. The maps we will give are all well-defined and not homotopic to zero.

Case 1: If $\mid J \mid \geq 2(p-1)$, maps as in Diagram 1.

Case 2: If $p-1 \leq  \mid J \mid < 2(p-1)$, maps as in Diagram 2.

Case 3: If $1 \leq \mid J \mid  < p-1$, maps as in Diagram 3.

Diagram 1:  $j = |J|+1$;  $j > 2(p-1)$
\[\xymatrix@C=70pt@R=20pt{
P_{v_{i-1}} \ar@{.}[d]     \\
P_{u-1}  \ar@/^/[d]^{x}	\\
P_u   \ar@/^/[u]^y  \ar@(l,l)[ddd]_{xy} 	\\
P_u \ar[u]_{\id} \ar[r]^x  \ar[d]^{\id}  &  P_{u+1} \ar[d]^x	\\
P_u \ar@/^/[u]^{xy} \ar[r]^{x^2} \ar@{.}[d]          &  P_{u+2}  \ar@/^/[u]^y \ar@{.}[d]	\\
P_u \ar[r]^{x^{p-1}}  &  P_{u+(p-1)} \ar[d]^x 	\\
 &  P_{u+p} \ar@/^/[u]^y \ar@{.}[d] 	\\
P_v \ar[r]^{y^{p-1}} \ar[d]^{xy} &  P_{v-(p-1)} \ar[d]^x \\
P_v \ar[r]^{y^{p-2}} \ar@/^/[u]^{\id} \ar@{.}[d] &  P_{v-(p-2)} \ar@/^/[u]^y \ar@{.}[d] \\
P_v \ar[r]^{y} \ar@/^/[d]^{\id} &  P_{v-1} \\
P_{v} \ar@/^/[d]^x   \ar@(l,l)[uuu]^{xy}  \\
P_{v+1} \ar@/^/[u]^y \ar@{.}[d] \\
P_{u_{i+1}}
}\]

\newpage
Diagram 2:  $j = |J|+1$;  $p-1 <  j  \leq 2(p-1)$
\[\xymatrix@C=40pt@R=25pt{
	P_{v_{i-1}} \ar@{.}[d]     \\
	P_{u-1}  \ar[d]^x	\\
	P_u  \ar@/^/[u]^y  \ar@(l,l)[dddddd]_{\begin{pmatrix} xy & 0 \end{pmatrix} }	\\
	P_u \ar[u]^{id} \ar[r]^x  \ar[d]^{id}  &  P_{u+1} \ar[d]^x	\\
	P_u \ar@/^/[u]^{xy} \ar[r]^{x^2} \ar@{.}[d]          &  P_{u+2}  \ar@/^/[u]^y \ar@{.}[d]	\\
	P_u \ar[r]^{x^{j-p}}  \ar[d]^{\begin{pmatrix} id& 0 \end{pmatrix}}  &  P_{u+(j-p)} \ar[d]^x 	\\
	P_u \oplus P_v \ar@/^/[u]^{\mu_u =\begin{pmatrix} xy \\ 0 \end{pmatrix}} \ar[r]^{\binom{x^{j-(p-1)}}{y^{p-1}}}  \ar[d]^{\xi_1}  &  P_{u+(j-p)+1} \ar@/^/[u]^y \ar[d]^x  \\
	P_u \oplus P_v \ar@/^/[u]^{\xi_2} \ar[r]^{\binom{x^{j-p+2}}{y^{p-2}}} \ar@{.}[d] &  P_{u+(j-p)+2} \ar@/^/[u]^y \ar@{.}[d]  \\
	P_u \oplus P_v \ar[r]^{\binom{x^{p-1}}{y^{j-(p-1)}}}  \ar[d]^{\begin{pmatrix} 0 \\ xy \end{pmatrix}=\nu_v }  &  P_{v-(j-p)-1} \ar[d]^x  \\
	P_v \ar@/^/[u]^{\begin{pmatrix} 0 & id \end{pmatrix} } \ar[r]^{y^{j-p}} \ar@{.}[d] &  P_{v-(j-p)} \ar@/^/[u]^y \ar@{.}[d] \\
	P_v \ar[r]^{y^{2}} \ar@/^/[d]^{xy} &  P_{v-2} \ar[d]^x \\
	P_v \ar@/^/[u]^{id}\ar[r]^{y} \ar[d]^{id} &  P_{v-1} \ar@/^/[u]^{y}\\
	P_{v} \ar@/^/[d]^x   \ar@(l,l)[uuuuuu]^{\begin{pmatrix} 0 & xy \end{pmatrix} }  \\
	P_{v+1} \ar@/^/[u]^y \ar@{.}[d]  \\
	P_{u_{i+1}}
}\]

If $j=p$, the horizontal maps $x, x^2 \dots x^{j-p}$ as well as $y, y^2 \dots y^{j-p}$ vanish and we get the left map of Diagram 3.

Diagram 3:  $j = |J|+1$; $2 \leq j \leq p-1$

\[\xymatrix@C=70pt@R=20pt{
\\
\\
\\
P_u   \ar@(l,l)[dddd]_{\epsilon_u=\begin{pmatrix} {xy,-x^{j}}\end{pmatrix} }	\\
P_u \oplus P_v  \ar[u]_{\begin{pmatrix} id\\ 0 \end{pmatrix} } \ar[r]^{\binom{x^{}}{y^{j-1}}}   \ar[d]^{\xi_1}  &  P_{u+1} \ar[d]^x	\\
P_u \oplus P_v  \ar@/^/[u]^{\xi_2} \ar[r]^{\binom{x^{2}}{y^{j-2}}} \ar@{.}[d]   &  P_{u+2}  \ar@/^/[u]^y \ar@{.}[d]	\\
P_u \oplus P_v \ar[r]^{\binom{x^{j-2}}{y^{2}}}    \ar[d]^{\xi_1}  &  P_{v-2} \ar[d]^{x} 	\\
P_u \oplus P_v  \ar@/^/[u]^{\xi_2}  \ar[r]^{\binom{x^{j-1}}{y^{}}}    \ar[d]^{\begin{pmatrix} 0\\ id \end{pmatrix} }  &  P_{v-1}  \ar@/^/[u]^y	\\
P_v     \ar@(l,l)[uuuu]^{\epsilon_v=\begin{pmatrix} -y^{j}, xy\end{pmatrix} }
 \\
}
\quad
\xymatrix@C=70pt@R=20pt{
P_{v_{i-1}} \ar@{.}[d]     \\
P_{u-1}  \ar[d]^{x}  \ar@{.}[d]	\\
P_u  \ar@/^/[u]^y   \ar@(r,r)[ddddddd]^{x^j}	\\
\\
\\
\\
\\
\\
\\
P_{v} \ar@/^/[d]^x   \ar@(l,l)[uuuuuuu]^{y^j}    \\
P_{v+1} \ar@/^/[u]^y \ar@{.}[d] \\
P_{u_{i+1}}
}
\]
We recall that we denoted $T_i$ in Definition \ref{Elementary} by $P'_i$.
For $i, i+1 \in J$ we denote by $\alpha_{i,i+1}: P'_i \to P'_{i+1}$ the appropriate adjacent map from the diagrams, and by $\alpha_{i+1,i}: P'_{i+1} \to P'_{i}$ the corresponding adjacent map in the opposite direction.

 We denote by $\alpha_{t,\tilde{t}}$ the composition of maps  $\alpha_{\tt-1,\tt}$ $\circ \dots$ $\circ$ $\alpha_{{t}+1,{t}+2} \circ$  $\alpha_{t,t+1}$  and by $\alpha_{\tt,t}$ the composition $\alpha_{t+1,t} \circ \dots \circ \alpha_{\tilde{t}-1,\tilde{t}-2} \circ$ $\alpha_{\tilde{t}, \tilde{t}-1}$.
 	
 	 For $t, t+1 \in I_0$ we denote by $\gamma_{t,t+1}: P_t \to P_{t+1}$ and by $\gamma_{t+1,t}: P_{t+1} \to P_{t}$ multiplication by $x$ or $y$ respectively. When  $|I_0^i|=1$ or $|I_0^{i+1}|=1$, we have a map $\epsilon_t: P'_{t} \to P'_{t}$ inherited from the self-map  $xy$ from $P_t$ to itself.

\item  \textbf{B}  In addition, we have another set of self-maps which are maps from $Q_t \to P_t$ to itself which are not homotopic to zero.
For $t \in J$,
\[
\textbf{(B.1)}
\begin{array}{cc}
\xymatrix@C=50pt@R=30pt{
P_u \oplus P_v \ar[d]\ar@{->}[d]_{\begin{pmatrix} -d_0(xy)^{\ovr{v-t}} &d_0x^{\ovr{v-u}} \\ 0&0 \end{pmatrix} }  \ar[r]^{\begin{pmatrix} x^{\ovr{t-u}} \\ y^{\ovr{v-t}} \end{pmatrix}}   & P_t \ar[d]^0 \\
P_u \oplus P_v  \ar[r]^{\begin{pmatrix} x^{\ovr{t-u}} \\ y^{\ovr{v-t}} \end{pmatrix}}   & P_{t} 
}
\end{array}
\]
\[
\textbf{(B.2)}
\begin{array}{cc}
\xymatrix@C=50pt@R=30pt{
P_u \oplus P_v \ar[d]\ar@{->}[d]_{\begin{pmatrix} 0 &0 \\ c_0y^{\ovr{v-u}}&-c_0(xy)^{\ovr{t-u}} \end{pmatrix} }  \ar[r]^{\begin{pmatrix} x^{\ovr{t-u}} \\ y^{\ovr{v-t}} \end{pmatrix}}   & P_t \ar[d]^0 \\
P_u \oplus P_v  \ar[r]^{\begin{pmatrix} x^{\ovr{t-u}} \\ y^{\ovr{v-t}} \end{pmatrix}}   & P_t 
}
\end{array}
\]
We will show in the main theorem that when $d_0+c_0=0$, the sum of B.1 and B.2 is homotopic to zero. We have left the dual pair in the catalog for convenience in decomposing other maps.
\item \textbf{C}. Additional self-maps not necessarily irreducible

\textbf{(C.1)} Case of single projective in $Q_t$: By Lemma \ref{homogeneous map lem}, a complex with $Q_t$ a single projetive can exist anly if $j>p$.
 It is usually a composition of $\alpha_{t,t\pm 1}$ and $\alpha_{t\pm 1,t}$ but it is convenient to refer to, and is actually necessary to the catalog when $j=p+1$.
\[
\begin{array}{cc}
\xymatrix@C=70pt@R=30pt{
P_u\ar[d]\ar@{->}[d]_{\delta_{u,t}^q:(xy)^q} \ar[r]^{x^{\ovr{t-u}}}   & P_t \ar[d]^{(xy)^q} \\
P_u \ar[r]^{x^{\ovr{t-u}}} & P_{t} 
}
&
\xymatrix@C=70pt@R=30pt{
P_v \ar[d]\ar@{->}[d]_{\delta_{v,t}^w:(xy)^w} \ar[r]^{y^{\ovr{v-t}}}   & P_t \ar[d]^{(xy)^w} \\
P_v \ar[r]^{y^{\ovr{v-t}}} & P_{t} 
}
\end{array}
\]
The chain map $\delta_{u,t}^q$ is homotopic to zero when $q \geq \ovr{u-t}$ and 
$\delta_{v,t}^w$ is homotopic to zero when $w \geq \ovr{v-t}$.

.

\textbf{(C.2)} Case of double projective in $Q_t$: $j \leq 2(p-1)$
\[
\begin{array}{cc}
\xymatrix@C=30pt@R=30pt{
P_u \oplus P_v \ar[d]\ar@{->}[d]_{\begin{pmatrix} (xy)^q &0 \\& (xy)^q \end{pmatrix} }  \ar[r]^{\begin{pmatrix} x^{\ovr{t-u}} \\ y^{\ovr{v-t}} \end{pmatrix}}   & P_t \ar[d]^{(xy)^q} \\
P_u \oplus P_v  \ar[r]^{\begin{pmatrix} x^{\ovr{t-u}} \\ y^{\ovr{v-t}} \end{pmatrix}}   & P_{t} 
}
\end{array}
\]
As in the case of C.1, this is usually a composition of  $\alpha_{t,t\pm 1}$ and $\alpha_{t\pm 1,t}$ which we include as a convenience, but it is actually irreducible and needed in the catalog when $j=2$. In order to consider the homotopy relations, it is useful to consider the following linear combination of B.1, B.2 and C.2:
\[
\begin{array}{cc}
	\xymatrix@C=30pt@R=30pt{
		P_u \oplus P_v \ar[d]\ar@{->}[d]_{\begin{pmatrix} (h_0-d_0)(xy)^q &d_0(xy)^{q-(\ovr{v-t})}x^{\ovr{v-u}} \\c_0(xy)^{q-(\ovr{t-u})}y^{\ovr{v-u}}& (h_0-c_0)(xy)^q \end{pmatrix} }  \ar[r]^{\begin{pmatrix} x^{\ovr{t-u}} \\ y^{\ovr{v-t}} \end{pmatrix}}   & P_t \ar[d]^{h_0(xy)^q} \\
		P_u \oplus P_v  \ar[r]^{\begin{pmatrix} x^{\ovr{t-u}} \\ y^{\ovr{v-t}} \end{pmatrix}}   & P_{t} 
	}
\end{array}
\]
If $q<\ovr{v-t}$ then $d_0=0$ and if $q<\ovr{t-u}$ then $c_0=0$. The map is homotopic to zero only if $b_0+c_0=h_0$ and then the homotopy is 
\[
T=(c_0(xy)^{q-(\ovr{t-u})}y^{\ovr{t-u}}, d_0(xy)^{q-(\ovr{v-t})}x^{\ovr{v-t}} ).
\]

\textbf{(C.3)} Case of double projective in $Q_t$: $j =\ovr{v-u} \leq r$ and $\ovr{u-v}=r-j.$
\[
\begin{array}{cc}
\xymatrix@C=50pt@R=30pt{
P_u \oplus P_v \ar[d]\ar@{->}[d]_{\begin{pmatrix}0 &e_0y^{\ovr{u-v}+hr}(xy)^{q_2} \\f_0x^{\ovr{u-v}+hr}(xy)^{q_1}& 0 \end{pmatrix} }  \ar[r]^{\begin{pmatrix} x^{\ovr{t-u}} \\ y^{\ovr{v-t}} \end{pmatrix}}   & P_t \ar[d]^0 \\
P_u \oplus P_v  \ar[r]^{\begin{pmatrix} x^{\ovr{\tilde{t}-u}} \\ y^{\ovr{v-\tilde{t}}} \end{pmatrix}}   & P_{\tilde{t}} 
}
\end{array}
\]
For $e_0 \neq 0$, the map is non-zero if $ 0\leq q_1 \leq p-1-\ovr{u-v}-hr$ and  well-defined if $q_1 \geq p-\ovr{\tt-v}-hr$. For $f_0 \neq 0$, it is non-zero if $0 \leq q_2 \leq p-1-\ovr{u-v}-hr$ and well-defined if $q_2 \geq p-\ovr{u-\tt}-hr$. These occur for the maximal values of $h$

\item \textbf{D}. When $r<p$, self-maps with double projectives.
\[\textbf{(D.1)}
\begin{array}{cc}
	\xymatrix@C=50pt@R=30pt{
		P_u \oplus P_v \ar[d]\ar@{->}[d]_{\begin{pmatrix} 
			x^r&0\\
				0&x^r \end{pmatrix} }  \ar[r]^{\begin{pmatrix} x^{\ovr{t-u}}\\ y^{\ovr{v-t}} \end{pmatrix}}   & P_t \ar[d]^{x^r} \\
		P_u \oplus P_v  \ar[r]^{\begin{pmatrix} x^{\ovr{t-u}} \\ y^{\ovr{v-t}} \end{pmatrix}}   & P_{{t}} 
	}
\end{array}
\]

\[\textbf{(D.2)}
\begin{array}{cc}
	\xymatrix@C=50pt@R=30pt{
		P_u \oplus P_v \ar[d]\ar@{->}[d]_{\begin{pmatrix} 
				y^r&0\\
				0&y^r \end{pmatrix} }  \ar[r]^{\begin{pmatrix} x^{\ovr{t-u}}\\ y^{\ovr{v-t}} \end{pmatrix}}   & P_t \ar[d]^{y^r} \\
		P_u \oplus P_v  \ar[r]^{\begin{pmatrix} x^{\ovr{t-u}} \\ y^{\ovr{v-t}} \end{pmatrix}}   & P_{{t}} 
	}
\end{array}
\]

\textbf{(D.3)} This is a special case, occuring only when $r=2j <p$ and $h$ is maximal with $hr <p$. We combine two linearly independent maps. The map is well-defined for $e_0 \neq 0$ if $hr+\ovr{v-t} \geq p$ and well-defined for $f_0 \neq 0$ if $hr+\ovr{t-u} \geq p$.
\[ 
\begin{array}{cc}\\
	\xymatrix@C=50pt@R=30pt{
		P_u \oplus P_v \ar[d]\ar@{->}[d]_{\begin{pmatrix} 
				0&e_0 y^{hr-j}(xy)^{\ovr{t-u}}\\
				f_0 x^{hr-j}(xy)^{\ovr{v-t}}&0 \end{pmatrix} }  \ar[r]^{\begin{pmatrix} x^{\ovr{t-u}}\\ y^{\ovr{v-t}} \end{pmatrix}}   & P_t \ar[d]^{e_0y^{hr}+f_0x^{hr}}
			 \\
		P_u \oplus P_v  \ar[r]^{\begin{pmatrix} x^{\ovr{t-u}} \\ y^{\ovr{v-t}} \end{pmatrix}}   & P_{{t}} 
	}
\end{array}
\]

\item $E$. Chain maps with one vertical term zero.

\textbf{(E.1)} Chain maps with degree $0$ term zero. Case of single projective in $Q_t$: which can exist just if $j>p$.

\[
\begin{array}{cc}
	\xymatrix@C=70pt@R=30pt{
		P_u\ar[d]\ar@{->}[d]_{0} \ar[r]^{x^{\ovr{t-u}}}   & P_t \ar[d]^{x^{\tilde{t}-t}(xy)^{p-\overline{t-u`}}} \\
		Q_{\tilde{t}} \ar[r] & P_{\tilde{t}} 
	}
	&
	\xymatrix@C=70pt@R=30pt{
		P_v \ar[d]\ar@{->}[d]_{0} \ar[r]^{y^{\ovr{v-\tilde{t}}}}   & P_{\tilde{t}} \ar[d]^{y^{\tilde{t}-t}(xy)^{p-\ovr{v-t}}} \\
		Q_t \ar[r] & P_{t} 
	}
\end{array}
\]

\textbf{(E.2)}
Chain maps with degree $1$ term zero, single projective. We assume $j >p$. For every $w \in I_0^i$,$t \in J$, such that $\ovr{t-u} \leq j-p$, $\ovr{t-w}\leq p$, we have the following, which we denote by $\beta^u_w: P'_w \to P'_{t}$.
\[
\begin{array}{cc}
	\xymatrix@C=70pt@R=30pt{
		P_w\ar[d]\ar@{->}[d]_{x^{\ovr{u-w}}(xy)^{p-(\ovr{t-w})}}  \\
		P_u \ar[r]^{x^{\ovr{t-u}}} & P_{t} 
	}
\end{array}
\]
For every $w \in I_0^{i+1},t \in J$, such that $\ovr{v-t} \leq j-p$, $\ovr{w-t}\leq p$, 
we have the following map, which  we denote by $\beta^v_w: P'_w \to P'_{t}$.
\[
\begin{array}{cc}
	\xymatrix@C=70pt@R=30pt{
		P_w\ar[d]\ar@{->}[d]_{y^{\ovr{w-v}}(xy)^{p-(\ovr{w-t})}}  \\
		P_v \ar[r]^{y^{\ovr{v-t}}} & 																																				P_{t} 
	}
\end{array}
\]

\textbf{(E.3)}
Chain maps with degree  $1$ term zero, double projective. For $hr < p$ and every $t \in J$, such that $hr +\ovr{t-u} \geq p$, or in the dual, $hr +\ovr{v-t} \geq p$.

\[
\begin{array}{cc}
	\xymatrix@C=50pt@R=30pt{
		P_u \oplus P_v \ar[d]\ar@{->}[d]_{\begin{pmatrix} 
			x^{hr}&0\\
			0&0\end{pmatrix} }  \ar[r]^{\begin{pmatrix} x^{\ovr{t-u}}\\ y^{\ovr{v-t}} \end{pmatrix}}   & P_t \ar[d]^{0} \\
		P_u \oplus P_v  \ar[r]^{\begin{pmatrix} x^{\ovr{\tilde{t}-u}} \\ y^{\ovr{v-\tilde{t}}} \end{pmatrix}}   & P_{\tilde{t}} 
	}
\end{array}
\]
\[
\begin{array}{cc}
	\xymatrix@C=50pt@R=30pt{
		P_u \oplus P_v \ar[d]\ar@{->}[d]_{\begin{pmatrix} 
				0&0\\
				0&y^{hr}\end{pmatrix} }  \ar[r]^{\begin{pmatrix} x^{\ovr{t-u}}\\ y^{\ovr{v-t}} \end{pmatrix}}   & P_t \ar[d]^{0} \\
		P_u \oplus P_v  \ar[r]^{\begin{pmatrix} x^{\ovr{\tilde{t}-u}} \\ y^{\ovr{v-\tilde{t}}} \end{pmatrix}}   & P_{\tilde{t}} 
	}
\end{array}
\]

\textbf{(E.4)} Chain maps with degree  $1$ term zero, stalk complex to double  projective.
\[
\begin{array}{cc}
	\xymatrix@C=50pt@R=30pt{
		P_u \ar[d]\ar@{->}[d]_{\begin{pmatrix} 
			x^{hr}&0
			\end{pmatrix} }   \\
		P_u \oplus P_v  \ar[r]^{\begin{pmatrix} x^{\ovr{{t}-u}} \\ y^{\ovr{v-{t}}} \end{pmatrix}}   & P_{{t}} 
	}
\end{array}
\]

Well-defined if $hr+\ovr{t-u}\geq p$.
\[
\begin{array}{cc}
	\xymatrix@C=50pt@R=30pt{
		P_v \ar[d]\ar@{->}[d]_{\begin{pmatrix} 
				0&y^{hr}\end{pmatrix} }  \\
		P_u \oplus P_v  \ar[r]^{\begin{pmatrix} x^{\ovr{t-u}} \\ y^{\ovr{v-t}} \end{pmatrix}}   & P_{t} 
	}
\end{array}
\]

Well-defined if $hr+\ovr{v-t}\geq p$. 

\textbf{(E.5)} Maps of stalk complexes. Suppose the $I-I_0$ contains more than a single connected component, that $r<p$ and  that $s=\ovr{v_{i}-u_{i+1}}<p$. Maps from $P_{u_{i+1}}$  $P_{v_{i-1}}$ given by $x^s$ or $y^s$  must be included in the catalog in order that we can obtain self maps of $P_w$ given by $x^r$ and $y^r$. These are similar to the maps $x^j$ and $y^j$ appearing in Diagram 3.  If the interval in $I_0$ containing $w$ consists of a single index, then the self-map of $P_w$ given by $xy$ is irreducible.

\end{itemize}

\section{Maps between components of an elementary tilting complex} 
In this section, we will prove the proposition justifying the listed maps given in the catalog. In $\S \ref{catalog of intra-arc maps}$ the adjacent maps in $A$ are irreducible, as are the self-maps in $B$. Of the maps in $C$, the maps with the smallest powers of $xy$ are sometimes irreducible. When $u+1=v-1=t$, the map $C.1$ with $q=1$ is irreducible.
 
\begin{prop}\label{internal} We are given a group algebra $F[C_p \times C_p)) \rtimes C_r]$ and  an elementary equivalence determined by a subset $I_0$ of the set of residues $I$.
   For each interval $J_i$ in $I-I_0$, the set of maps in the catalog, whose nature  depends on the relative sizes of $r$ and $p$ and the location of $I_0$, are all well-defined and not  homotopic to zero. All the maps internal to the set $I_0^i \cup J_i \cup I_0^{i+1}$ can be written a linear combinations of compositions of maps in the catalog.
\end{prop}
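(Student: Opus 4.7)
The plan is to establish three items in sequence: (i) every chain map in the catalog is well defined, (ii) none of them is null-homotopic beyond the explicit linear combination that the catalog already flags, and (iii) every homogeneous chain map $T_t\to T_{\tilde t}$ with $t,\tt\in I_0^i\cup J_i\cup I_0^{i+1}$ is a linear combination of compositions of catalog maps.

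First I would dispatch (i) by going through each of A, B, C, D, E in turn and checking square commutativity using only $xy=yx$, $x^p=y^p=0$, and the residue identity of \lemref{short}. The adjacent maps in Diagrams 1--3 reduce to the single identity $x\cdot x^{\ovr{t-u}}=x^{\ovr{(t+1)-u}}$ valid exactly when $t+1\in J$; the hybrid maps $(\xi_1,x)$ and $(\xi_2,y)$ of Definition \ref{Def. 42} commute because the extra $xy$ factor on one column of the vertical matrix compensates for the unit decrease in the exponent of $y^{\ovr{v-t}}$ or $x^{\ovr{t-u}}$ on the other. Types B, C, D, E then reduce to monomial bookkeeping with $x^p=y^p=0$ and the collapse $x^a y^b=(xy)^{\min(a,b)}\cdot(\text{monomial})$.

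Next I would handle (ii) by parameterizing a general homotopy between $T_t$ and $T_{\tt}$ component-wise using the basis of \lemref{homogeneous map lem}, and expanding $ds+sd$ in the monomial basis. Matching coefficients case by case would show that either the resulting linear system is infeasible (so the map survives in the homotopy category) or it reproduces exactly the relation already recorded below C.2, namely $d_0+c_0=h_0$ with the explicit homotopy $T=(c_0(xy)^{q-\ovr{t-u}}y^{\ovr{t-u}},\,d_0(xy)^{q-\ovr{v-t}}x^{\ovr{v-t}})$. Finally, for (iii), I would argue by induction on the total $(x,y)$-degree of the leading monomial of a general homogeneous chain map $f\colon T_t\to T_{\tt}$, whose matrix entries are enumerated by \lemref{homogeneous map lem}. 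The case split runs along three axes: (a) whether source and target are both stalks, both two-term complexes, or mixed; (b) whether $t$ and $\tt$ lie in the same piece of $I_0^i\cup J\cup I_0^{i+1}$; and (c) where $|J|$ falls relative to the thresholds $p-1,p,2(p-1)$ that toggle the Diagram type. In each configuration I would subtract a composition of adjacent maps $\alpha_{s,s\pm1}$, interval maps $\gamma$, or the mixed-degree maps $\beta^u_w,\beta^v_w$ from E.2 so as to strictly decrease the degree, leaving a residual which after finitely many reductions is either zero or one of the recognized self-maps B, C, D, E.5.

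The hard part will be step (iii) in the small-prime regime $r<p$, where the new irreducibles in D and the inter-stalk maps in E.5 must enter as generators without double counting. In particular, one has to verify that the pair D.3 cannot be written as a composition of lower-degree catalog maps, and that at the maximal $h$ with $hr<p$ every residual self-map the lemma produces is captured by D.1, D.2, or D.3. Throughout, \lemref{short} is applied to collapse residues of differences into residues of sums when confirming that a composition of $\alpha$'s lands on the expected indecomposable projective.
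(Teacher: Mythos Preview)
Your plan for parts (i) and (ii) is essentially what the paper does: a type-by-type verification of commutativity and non-null-homotopy using $x^p=y^p=0$, $xy=yx$, and \lemref{short}. No objection there.

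For part (iii), your outline and the paper's argument are close in spirit but differ in organization, and the difference matters at one point. The paper does not run an induction on monomial degree. Instead it splits into five structural cases according to the shape of $Q_t$ and $Q_{\tt}$: both stalks (Case~2.1), both single projectives $P_u$ (Case~2.2), both double projectives $P_u\oplus P_v$ (Case~2.3), one single and one double (Case~2.4), and stalk versus two-term (Case~2.5). Within each case it writes down the most general chain map allowed by \lemref{homogeneous map lem} and then subtracts explicit catalog compositions until the residual is visibly zero or a listed self-map. Your threefold split (a)(b)(c) covers the same territory but conflates Cases~2.2, 2.3, 2.4 under ``both two-term,'' and that conflation hides the step where your degree-reduction scheme is weakest.

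The potential gap is in the double-projective case (the paper's Case~2.3, your small-$r$ regime). There the general map has a $2\times 2$ matrix in degree~$0$, and the paper's first move is to \emph{diagonalize} that matrix by subtracting compositions of B.1 with D.1 and C.2, and then---crucially---compositions of the homotopic-to-zero map from C.2 (the one with $h_0=d_0+c_0$) with $\alpha_{t,\tt}$ and powers of D.2. These subtractions kill the off-diagonal entries $a_{12},a_{21}$ without necessarily lowering the total $(x,y)$-degree of the remaining entries; they trade an off-diagonal monomial for diagonal ones of comparable degree. So ``subtract to strictly decrease degree'' is not quite the right invariant here, and your proposal does not anticipate using a null-homotopic catalog combination as a rewriting tool. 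Once the matrix is diagonal the paper solves two scalar equations (its (4.1)--(4.2)) directly, which is straightforward. If you replace your degree induction in this case by the diagonalize-then-solve routine, the rest of your plan goes through.
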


\begin{proof}
\begin{enumerate}
\item  \textbf{The maps in the catalog are well-defined and not homotopic to zero}

We frequently use $x^p=y^p=0$ as we mentioned before.
All the maps between two stalk complexes are well-defined by the definition of the Brauer correspondent. Also, all the maps with zero at one end of the main diagonal are well-defined.

\textbf{A. Diagram 1}
 We want to prove that all maps in Diagram 1 are well-defined and to check homotopy to zero. Maps between stalk complexes are already well-defined. Now we will check all maps between stalk complexes and $2$-complexes as follows:

 The long arrow.
\[
\begin{array}{cc}
\xymatrix@C=60pt@R=20pt{
P_u \ar[d]_{xy}   \    \\
P_u  \ar[r]^{x^{p-1}}   &  P_{u+(p-1)}\\
}
&
\xymatrix@C=60pt@R=20pt{
P_v \ar[d]_{xy}      \\
P_v \ar[r]^{y^{p-1}}   &  P_{v-(p-1)}\\
}
\end{array}
\]

both maps are well-defined because \begin{center}  $(xy)\cdot x^{p-1}=y\cdot x^{p} = 0$ and
 $(xy)\cdot y^{p-1}=x\cdot y^{p} = 0$. \end{center} These maps are not homotopic to zero because they end with zero at the upper end of the off-diagonal.

The following maps occur only in Diagram 1:
\[
\begin{array}{cc}
\xymatrix@C=60pt@R=20pt{
        &  P_{u+p}  \ar[d]^y\\
P_u  \ar[r]^{x^{p-1}}   &  P_{u+{(p-1)}}	\\
}
&
\xymatrix@C=60pt@R=20pt{
    &  P_{v-p}  \ar[d]^x  \\
P_v  \ar[r]^{y^{p-1}}   &  P_{v-{(p-1)}}	\\
}
\end{array}
\]

both maps are well-defined because one of the modules on the main diagonal is zero.
These maps are not homotopic to zero because the diagonal maps are zero.

The following maps occur only in Diagram 1:
\[
\begin{array}{cc}
\xymatrix@C=60pt@R=20pt{
P_u  \ar[r]^{x^{p-1}}         &  P_{u+(p-1)}  \ar[d]^x\\
   &  P_{u+p}	\\
}
&
\xymatrix@C=60pt@R=20pt{
P_v \ar[r]^{y^{p-1}} &  P_{v-(p-1)} \ar[d]^y\\
&  P_{v-p}  \\
}
\end{array}
\]

These maps are well-defined because $x^p=y^p=0$ and they are not homotopic to zero because one of the off-diagonal modules is the zero module.
\\

This pair of dual maps occurs also in Diagram 2.
\[
\begin{array}{cc}
\xymatrix@C=60pt@R=20pt{
P_u \ar[d]_{id} \ar[r]^{x}         &  P_{u+1}  \\
P_u 	\\
}
&
\xymatrix@C=60pt@R=20pt{
P_v \ar[r]^{y} \ar[d]^{id} &  P_{v-1} \\
P_v \\
}
\end{array}
\]

Both maps are automatically well-defined because one main diagonal module is zero and they are not homotopic to zero because of the identity map.

Now  we will consider all maps between two $2$-complexes in Diagram 1. These occur also in Diagram 2 when $j>p+1$. We start with $\alpha_{t, \ovr{t+1}}$ and $\alpha_{ \ovr{t+1},t}$ for $Q_t =P_u$ and dually, for $Q_t=P_v$, where $t, \ovr{t+1} \in J$

\[
\begin{array}{cc}
\xymatrix@C=60pt@R=20pt{
P_u \ar[d]_{id} \ar[r]^{x^{\ovr{t-u}}}         &  P_{t}  \ar[d]^x\\
P_u  \ar[r]^{x^{\ovr{t-u+1}}}   &  P_{t+1}	\\
}
&
\xymatrix@C=60pt@R=20pt{
P_v \ar[r]^{y^{\ovr{v-t-1}}} \ar[d]^{id} &  P_{\ovr{t+1}} \ar[d]^y\\
P_v \ar[r]^{y^{\ovr{v-t}}}&  P_{t}  \\
}
\end{array}
\]
\noindent They are well-defined because $ x^{\ovr{t-u+1}} \circ id=x \circ x^{\ovr{t-u}}$ and dually $ y^{\ovr{v-t}} \circ id= y \circ y^{\ovr{v-t-1} } $. Maps that have a square containing a vertical identity can never be homotopic to zero.
In the second case:

\[
\begin{array}{cc}
\xymatrix@C=60pt@R=20pt{
P_u \ar[d]^{xy} \ar[r]^{x^{\ovr{t-u+1}}}         &  P_{t+1}  \ar[d]^y 	\\
P_u  \ar[r]^{x^{\ovr{t-u}}}   &  P_{t}	\\
}
&
\xymatrix@C=60pt@R=20pt{
P_v \ar[r]^{y^{\ovr{v-t}}} \ar[d]^{xy} &  P_t \ar[d]^x \\
P_v \ar[r]^{y^{\ovr{v-t-1}}}&  P_{t+1}  \\
}
\end{array}
\]
the chain maps are well-defined because $ x^{\ovr{t-u}} \circ xy= y \circ x^{\ovr{t-u+1}}$ and dually
 $(xy) \cdot y^{\ovr{v-t-1}}=y^{\ovr{v-t}}\cdot x$.
We want to prove that these maps are not homotopic to zero.  If we can find $T:P_{\ovr{t+1}} \rightarrow P_{u}$, then since the horizontal map $x^{\ovr{t-u}}$ has a non-zero power of $x$, the composition with $T$ does not give $y$, and the case on the right is dual.

\textbf{A. Diagram 2}. Now we will look at Diagram $2$ and consider maps that do not occur in Diagram $1$. We start with the long arrows in Diagram 2.
\[
\begin{array}{cc}
\xymatrix@C=70pt@R=30pt{
P_u\ar[d]\ar@{->}[d]_{\begin{pmatrix} xy & 0 \end{pmatrix} }   \\
P_u \oplus P_v \ar[r]^{\begin{pmatrix} x^{p-1} \\ y^{\ovr{j-p+1}}  \end{pmatrix}} & P_{\ovr{v-j+p-1}}
}
&
\xymatrix@C=70pt@R=30pt{
P_v \ar[d]\ar@{->}[d]_{\begin{pmatrix} 0  & xy \end{pmatrix} }   \\
P_u \oplus P_v \ar[r]^{\begin{pmatrix} x^{\ovr{j-p+1}} \\ y^{p-1} \end{pmatrix}} & P_{\ovr{u+j-p+1}}
}
\end{array}
\]
 
They are well-defined because $ x^{p-1} \circ xy=0$ and dually $ y^{p-1} \circ xy=0$. Maps that have a zero module at one end of the off-diagonal can never be homotopic to zero.

\[
\begin{array}{cc}
\xymatrix@C=60pt@R=20pt{
P_u \oplus P_v \ar[r]^{\binom{x^{p-1}}{y^{\ovr{j-p+1}}}}  \ar[d]_{\begin{pmatrix} 0 \\ xy \end{pmatrix} }  &  P_{\ovr{v-j+p-1}} \ar[d]^x \ar@{->}[ld]_-{T_1} \\
P_v  \ar[r]^{y^{j-p}}  &  P_{\ovr{v-j+p}}  \\
}
&
\xymatrix@C=60pt@R=20pt{
P_u \oplus P_v \ar[d]_{\begin{pmatrix} xy \\ 0 \end{pmatrix} } \ar[r]^{\binom{x^{\ovr{j-p+1}}}{y^{p-1}}}  &  P_{\ovr{u+j-p+1}} \ar[d]^y  \ar@{->}[ld]_-{T_2} \\
P_u \ar[r]^{x^{j-p}}  &  P_{\ovr{u+j-p}} 	\\
}
\end{array}
\]
In this case  $j>p$. The chain maps are well-defined because $(xy) \cdot y^{j-p}=xy^{j-p+1}$ and dually $(xy) \cdot x^{j-p}=yx^{j-p+1}$.  If $j>p$ the maps are not homotopic to zero because if $T_1=x^{j-p+1}$ then $x^{j-p+1}y^{j-p}\not =x$ and dually, if $T_2=y^{j-p+1}$, then $y^{j-p+1}x^{j-p} \not =y$.

\[
\begin{array}{cc}
\xymatrix@C=60pt@R=20pt{
P_u \ar[d]_{\begin{pmatrix} id  & 0 \end{pmatrix} }  \ar[r]^{x^{j-p}}         &  P_{\ovr{u+j-p}}  \ar[d]^x\\
P_u\oplus P_v  \ar[r]^{\binom{x^{\ovr{j-p+1}}}{y^{p-1}}}   &  P_{\ovr{u+j-p+1}}	\\
}
&
\xymatrix@C=60pt@R=20pt{
P_v \ar[r]^{y^{j-p}} \ar[d]_{\begin{pmatrix} 0  & id \end{pmatrix} }  &  P_{\ovr{v-j+p}} \ar[d]^y\\
P_u\oplus P_v \ar[r]^{\binom{x^{p-1}}{y^{\ovr{j-p+1}}}} &  P_{\ovr{v-j+p-1}}  \\
}
\end{array}
\]
Again  $j>p$. The chain maps are well-defined because $x^{\ovr{j-p}}\cdot x=x^{j-p+1}$ and dually $y^{j-p}\cdot y=y^{j-p+1}$. Maps that have  a square containing a vertical identity map can never be homotopic to zero.

Let $j=\ovr{v-u}$ and $t \in J$.

\[
\begin{array}{cc}
\xymatrix@C=70pt@R=30pt{
P_u \oplus P_v \ar[d]\ar@{->}[d]_{\xi_1={\begin{pmatrix} id &0 \\0 & xy \end{pmatrix} }}  \ar[r]^{\begin{pmatrix} x^{\ovr{t-u}} \\ y^{\ovr{v-t}} \end{pmatrix}}   & P_t \ar[d]^x \ar@{->}[ld]_-{} \\
P_u \oplus P_v  \ar[r]^{\begin{pmatrix} x^{\ovr{t-u+1}} \\ y^{\ovr{v-t-1}} \end{pmatrix}}   & P_{t+1} \\
}
&
\xymatrix@C=70pt@R=30pt{
P_u \oplus P_v   \ar[r]^{\begin{pmatrix} x^{\ovr{t-u}} \\ y^{\ovr{v-t}} \end{pmatrix}}   & P_t  \\
P_u \oplus P_v  \ar@{->}[ru]_-{} \ar[u]^{\xi_2={\begin{pmatrix} xy &0 \\0 & id \end{pmatrix} }} \ar[r]^{\begin{pmatrix} x^{\ovr{t-u+1}} \\ y^{\ovr{v-t-1}} \end{pmatrix}}   & P_{t+1} \ar[u]_y \\
}
\end{array}
\]
They are well-defined because $\begin{pmatrix} id &0 \\0 & xy \end{pmatrix}$ $\cdot$ $\begin{pmatrix} x^{\ovr{t-u+1}} \\ y^{\ovr{v-t-1}} \end{pmatrix}$ $=$ $\begin{pmatrix} x^{\ovr{t-u}} \\ y^{\ovr{v-t}} \end{pmatrix}$ $\cdot$ $x$ as well as the dual case. They are not homotopic to zero because if in the map on the left, the first component of the homotopy is nonzero, the composition with the horizontal map contains $x^2$ and if the second component is non-zero, the composition has a power of $y$. The other map is dual.

\textbf{A. Diagram 3}. These maps occur in Diagram $3$ when $j \leq p-1$ or in Diagram $2$ when $j=p$.

\[
\begin{array}{cc}
\xymatrix@C=70pt@R=30pt{
P_u\ar[d]\ar@{->}[d]_{\begin{pmatrix} xy  & -x^j \end{pmatrix} }   \\
P_u \oplus P_v \ar[r]^{\begin{pmatrix} x^{j-1}\\ y \end{pmatrix}} & P_{v-1}
}
&
\xymatrix@C=70pt@R=30pt{
P_v \ar[d]\ar@{->}[d]_{\begin{pmatrix} -y^j  & xy \end{pmatrix} }   \\
P_u \oplus P_v \ar[r]^{\begin{pmatrix} x \\ y^{j-1} \end{pmatrix}} & P_{u+1}
}
\end{array}
\]
 
The chain maps are well-defined because $(xy) \cdot x^{j-1}-x^j \cdot y=0$ and dually $-y^j \cdot x+xy \cdot y^{j-1}=0$. Maps which have a zero module at one end of the off-diagonal can never be homotopic to zero.

\textbf{Self-maps B}.
We now check well-defined for the maps in  $B$.

For $t \in J$
\[
\begin{array}{cc}
\xymatrix@C=50pt@R=30pt{
P_u \oplus P_v \ar[d]\ar@{->}[d]_{\begin{pmatrix} -b_0(xy)^{\ovr{v-t}} &b_0x^{\ovr{v-u}} \\ 0&0 \end{pmatrix} }  \ar[r]^{\begin{pmatrix} x^{\ovr{t-u}} \\ y^{v-t} \end{pmatrix}}   & P_t \ar[d]^0 \\
P_u \oplus P_v  \ar[r]^{\begin{pmatrix} x^{\ovr{t-u}} \\ y^{\ovr{v-t}} \end{pmatrix}}   & P_{t} 
}
\end{array}
\]
\[
\begin{array}{cc}
\xymatrix@C=50pt@R=30pt{
P_u \oplus P_v \ar[d]\ar@{->}[d]_{\begin{pmatrix} 0 &0 \\ c_0y^{\ovr{v-u}}&-c_0(xy)^{\ovr{t-u}} \end{pmatrix} }  \ar[r]^{\begin{pmatrix} x^{\ovr{t-u}} \\ y^{\ovr{v-t}} \end{pmatrix}}   & P_t \ar[d]^0 \\
P_u \oplus P_v  \ar[r]^{\begin{pmatrix} x^{\ovr{t-u}} \\ y^{\ovr{v-t}} \end{pmatrix}}   & P_t 
}
\end{array}
\]
 For the first map, let $a \in P_u$ and $b \in P_v$. We check the commutativity of the square by diagram chasing.

\begin{align*}
(a,b) \cdot {\begin{pmatrix} -b_0(xy)^{\ovr{v-t}} &b_0x^{\ovr{v-u}} \\ 0&0 \end{pmatrix} }  \cdot {\begin{pmatrix} x^{\ovr{t-u}} \\ y^{\ovr{v-t}} \end{pmatrix}}=-ab_0x^{\ovr{t-u}}(xy)^{\ovr{v-t}}+ab_0x^{\ovr{v-u}}y^{\ovr{v-t}}=0
\end{align*}
For the second map
\begin{align*}
(a,b) \cdot {\begin{pmatrix} 0 &0 \\ c_0y^{\ovr{v-u}}&-c_0(xy)^{\ovr{t-u}} \end{pmatrix} } \cdot {\begin{pmatrix} x^{\ovr{t-u}} \\ y^{\ovr{v-t}} \end{pmatrix}}=bc_0x^{\ovr{t-u}}y^{\ovr{v-u}}-bc_0(xy)^{\ovr{t-u}}y^{\ovr{v-t}}=0
\end{align*}

Now we will prove that B.1 is not homotopic to zero, and we will prove while dealing with the maps of C that B.2 is homotopic to B.1.

For $t \in J$,
\[
\begin{array}{cc}
\xymatrix@C=50pt@R=30pt{
P_u \oplus P_v \ar[d]\ar@{->}[d]_{\begin{pmatrix} -d_0(xy)^{\ovr{v-t}} &d_0x^{\ovr{v-u}} \\ 0&0 \end{pmatrix} }  \ar[r]^{\begin{pmatrix} x^{\ovr{t-u}} \\ y^{\ovr{v-t}} \end{pmatrix}}   & P_t \ar[d]^0 \ar@{->}[ld]_-{T}\\
P_u \oplus P_v  \ar[r]_{\begin{pmatrix} x^{\ovr{t-u}} \\ y^{\ovr{v-t}} \end{pmatrix}}   & P_{t} 
}
\end{array}
\]
We need to find $T:P_{t} \rightarrow P_u \oplus P_v$ and to check if
\begin{align*}
 {\begin{pmatrix} x^{\ovr{t-u}} \\ y^{\ovr{v-t}} \end{pmatrix}} \circ T &=0\\
 T \circ{\begin{pmatrix} x^{\ovr{t-u}} \\ y^{\ovr{v-t}} \end{pmatrix}}  &={\begin{pmatrix} -d_0(xy)^{\ovr{v-t}} &d_0x^{\ovr{v-u}} \\ 0&0 \end{pmatrix}, } \\
\end{align*}

Let us take as our candidate for homotopy the map $T:P_{t}\rightarrow P_u \oplus P_v$  given by
${\begin{pmatrix} e_0y^{\ovr{t-u}}(xy)^{q_1} & f_0x^{\ovr{v-t}}(xy)^{q_2} \end{pmatrix} }$. By  Lemma \ref{homogeneous map lem}, if $r>p$, this is the only possibilty. From our calculations, it will be clear that even if $r<p$, multiplying the components of $T$ by various power of $x^r$ or $y^r$ could not give the desired composition.

So, if we look at the bottom triangle $ {\begin{pmatrix} x^{\ovr{t-u}} \\ y^{\ovr{v-t}} \end{pmatrix}}  \circ T \not =0$ unless $e_0+f_0=0$ and $q_1+\ovr{t-u}=q_2+\ovr{v-t}$. If $\ovr{v-t} \geq \ovr{t-u}$, we set $q_2=q$, $q_1=q+\ovr{v+u-2t}$. If $\ovr{t-u} \geq \ovr{v-t}$, we set $q_1=q$, $q_2=q+\ovr{2t-u-v}$. Now we check the second condition for the case $\ovr{t-u} \geq \ovr{v-t}$.
\[
-d_0(xy)^{\ovr{v-t}}=x^{\ovr{t-u}}e_0y^{\ovr{v-t}}(xy)^q
\]
\noindent which can only hold if $e_0=-d_0, \ovr{t-u}=\ovr{v-t}$ and $q=0$.  However, if all those conditions hold, then we must have $0=y^{\ovr{v-t}}e_0y^{\ovr{t-u}}(xy)^0$,  which cannot be true because $\ovr{t-u}+\ovr{v-t}=j <p$.  The case $\ovr{v-t} \geq \ovr{t-u}$ is similar, using the second column of the matrix.

\textbf{Self-maps C}.

Most of the additional self-maps $C.1$ are compositions of adjacent maps and therefore well-defined. If $j=2$, the maps in $C.1$ are irreducible but can be checked by inspection. In the case of a single projective, we have compositions of adjacent maps up and down or down and up, in the case of double projectives we have compositions of the maps in Definition \ref{Def. 42}. 

C.1: Case of single projective: $j>p$.
For $q<\ovr{t-u}$, $w<\ovr{v-t}, t \in J$.
\[
\begin{array}{cc}
	\xymatrix@C=70pt@R=30pt{
		P_u\ar[d]\ar@{->}[d]_{(xy)^q} \ar[r]^{x^{\ovr{t-u}}}   & P_t \ar@{->}[ld]_-{T} \ar[d]^{(xy)^q} \\
		P_u \ar[r]^{x^{\ovr{t-u}}} & P_{t} 
	}
	&
	\xymatrix@C=70pt@R=30pt{
		P_v \ar[d]\ar@{->}[d]_{(xy)^w} \ar[r]^{y^{\ovr{v-t}}}   & P_t  \ar@{->}[ld]_-{T}\ar[d]^{(xy)^w} \\
		P_v \ar[r]^{y^{\ovr{v-t}}} & P_{t} 
	}
\end{array}
\]
When $q \geq \ovr{t-u}$, we get a homotopy to the map on the left given by 
$T=(xy)^{q-\ovr{t-u}}y^{\ovr{t-u}}$.  When $w \geq \ovr{v-t}$, we get a homotopy for the second map given by $T=(xy)^{w-\ovr{v-t}}y^{\ovr{v-t}}$

C.2: Case of double projective: $j \leq 2(p-1)$
We only have to check our claim that the map below is homotopic to zero if $h_0=d_0+c_0$ and not all the parameters are zero. We assume
$c_0 \neq 0$, in which case $q \geq \ovr{t-u}$  or else $d_0 \neq 0$, in which case $q \geq \ovr{v-t}$. 
\[
\begin{array}{cc}
\xymatrix@C=30pt@R=30pt{
P_u \oplus P_v \ar[d]\ar@{->}[d]_{\begin{pmatrix} (h_0-d_0)(xy)^q &d_0(xy)^{q-(\ovr{v-t})}x^{\ovr{v-u}} \\c_0(xy)^{q-(\ovr{t-u})}y^{\ovr{v-u}}& (h_0-c_0)(xy)^q \end{pmatrix} }  \ar[r]^{\begin{pmatrix} x^{\ovr{t-u}} \\ y^{\ovr{v-t}} \end{pmatrix}}   & P_t \ar[d]^{h_0(xy)^q} \\
P_u \oplus P_v  \ar[r]^{\begin{pmatrix} x^{\ovr{t-u}} \\ y^{\ovr{v-t}} \end{pmatrix}}   & P_{t} 
}
\end{array}
\]

We define the homotopy map $T:P_t \to P_u \otimes P_v$ by
\[
(c_0y^{\ovr(t-u)}(xy)^{q-\ovr{t-u}}, d_0 x^{\ovr{v-t}}(xy)^{q-\ovr{v-t}})
\]
Then the bottom triangle yields $(c_0+d_0)(xy)^q$ and the upper triangle yields the desired matrix, with $c_0$ written for $h_0-d_0$ and $d_0$ written for $h_0-c_0$.

Self-maps D: When $r<p$, maps with double projective.
\[\textbf{(D.1)}
\begin{array}{cc}
	\xymatrix@C=50pt@R=30pt{
		P_u \oplus P_v \ar[d]\ar@{->}[d]_{\begin{pmatrix} 
			x^r&0\\
				0&x^r \end{pmatrix} }  \ar[r]^{\begin{pmatrix} x^{\ovr{t-u}}\\ y^{\ovr{v-t}} \end{pmatrix}}   & P_t \ar[d]^{x^r} \\
		P_u \oplus P_v  \ar[r]^{\begin{pmatrix} x^{\ovr{t-u}} \\ y^{\ovr{v-t}} \end{pmatrix}}   & P_{{t}} 
	}
\end{array}
\]
 well-defined because 
\[
(a,b) \cdot  \begin{pmatrix} x^{\ovr{t-u}} \\ y^{\ovr{v-t}} \end{pmatrix}\cdot x^r 
=ax^{\ovr{t-u}} \cdot x^r +by^{\ovr{v-t}} \cdot x^r\\
\]
On the other hand
\begin{align*}
(a,b) \cdot \begin{pmatrix} 
			x^r&0\\
				0&x^r \end{pmatrix} \cdot \begin{pmatrix} x^{\ovr{t-u}} \\ y^{\ovr{v-t}} \end{pmatrix}=ax^r \cdot x^{\ovr{t-u}}+bx^r \cdot y^{\ovr{v-t}}.
\end{align*}
The case of \textbf{(D.2)} is dual.

\textbf{(D.3)} This is a special case, occuring only when $r=2j <p$ and $h$ is maximal with $hr <p$.  The map is well-defined for $f_0 \neq 0$ if $hr+\ovr{t-u} \geq p$ and is well-defined for $e_0 \neq 0$ if $hr+\ovr{v-t} \geq p$.
\[ 
\begin{array}{cc}\\
	\xymatrix@C=50pt@R=30pt{
		P_u \oplus P_v \ar[d]\ar@{->}[d]_{\begin{pmatrix} 
				0&e_0 y^{hr-j}(xy)^{\ovr{t-u}}\\
				f_0 x^{hr-j}(xy)^{\ovr{v-t}}&0 \end{pmatrix} }  \ar[r]^{\begin{pmatrix} x^{\ovr{t-u}}\\ y^{\ovr{v-t}} \end{pmatrix}}   & P_t \ar[d]^{e_0y^{hr}+f_0x^{hr}}
			 \\
		P_u \oplus P_v  \ar[r]^{\begin{pmatrix} x^{\ovr{t-u}} \\ y^{\ovr{v-t}} \end{pmatrix}}   & P_{{t}} 
	}
\end{array}
\]
\noindent A homotopy for the bottom triangle would be $(f_0 x^{hr-\ovr{t-u}},e_0 y^{hr-\ovr{v-t}})$, but this would not give the proper matrix when composing the upper triangle. 

\textbf{E}. Chain maps with vertical term zero.

\textbf{(E.1)} Case of single projective in $Q_t$: By Lemma \ref{homogeneous map lem}, it can exist just if $j>p$.

\[
\begin{array}{cc}
	\xymatrix@C=70pt@R=30pt{
		P_u\ar[d]\ar@{->}[d]_{0} \ar[r]^{x^{\ovr{t-u}}}   & P_t \ar[d]^{x^{\ovr{\tilde{t}-t}}(xy)^{p-(\overline{\tt-u})}} \\
		Q_{\tilde{t}} \ar[r] & P_{\tilde{t}} 
	}
	&
	\xymatrix@C=70pt@R=30pt{
		P_v \ar[d]\ar@{->}[d]_{0} \ar[r]^{y^{\ovr{v-\tilde{t}}}}   & P_{\tilde{t}} \ar[d]^{y^{\ovr{\tilde{t}-t}}(xy)^{p-(\ovr{v-t})}} \\
		Q_t \ar[r] & P_{t} 
	}
\end{array}
\]
The maps are well-defined because all the powers of $x$, and, in the dual case, $y$, vanish and we get zero as desired.

\textbf{(E.2)}
Chain maps with degree $1$ term zero, stalk comples to single projective. 
\[
\begin{array}{cc}
	\xymatrix@C=70pt@R=30pt{
		P_w\ar[d]\ar@{->}[d]_{x^{\ovr{u-w}}(xy)^{p-(\ovr{t-w})}}  \\
		P_u \ar[r]^{x^{\ovr{t-u}}} & P_{t} 
	}
\end{array}
\]
\[
\begin{array}{cc}
	\xymatrix@C=70pt@R=30pt{
		P_w\ar[d]\ar@{->}[d]_{y^{\ovr{w-v}}(xy)^{p-(\ovr{w-t})}}  \\
		P_v \ar[r]^{y^{\ovr{v-t}}} & 																																				P_{t} 
	}
\end{array}
\]
In the first of these, the horizontal map exists if $j >p$ and $0 \leq \ovr{t-u} \leq j-p$. The vertical map exists and is nonzero for every $w \in I_0^i$,$t \in J$, such that  $\ovr{t-w} \leq p$. The map is well-defined because the composition of horizontal and vertical maps contains $x$ to the power $p$. Maps that have a zero module at one end of the off-diagonal can never be homotopic to zero. The second map is dual, where $w$ lies in $I_0^{i+1}.$

\textbf{(E.3)}
Chain maps with degree  $1$ term zero, double projective. Well-defined for $hr < p$ and every $t \in J$, such that $hr +\ovr{\tt-u} \geq p$, or in the dual, $hr +\ovr{v-\tt} \geq p$.

\[
\begin{array}{cc}
	\xymatrix@C=50pt@R=30pt{
		P_u \oplus P_v \ar[d]\ar@{->}[d]_{\begin{pmatrix} 
			x^{hr}&0\\
			0&0\end{pmatrix} }  \ar[r]^{\begin{pmatrix} x^{\ovr{t-u}}\\ y^{\ovr{v-t}} \end{pmatrix}}   & P_t \ar[d]^{0} \\
		P_u \oplus P_v  \ar[r]^{\begin{pmatrix} x^{\ovr{\tilde{t}-u}} \\ y^{\ovr{v-\tilde{t}}} \end{pmatrix}}   & P_{\tilde{t}} 
	}
\end{array}
\]
\[
\begin{array}{cc}
	\xymatrix@C=50pt@R=30pt{
		P_u \oplus P_v \ar[d]\ar@{->}[d]_{\begin{pmatrix} 
				0&0\\
				0&y^{hr}\end{pmatrix} }  \ar[r]^{\begin{pmatrix} x^{\ovr{t-u}}\\ y^{\ovr{v-t}} \end{pmatrix}}   & P_t \ar[d]^{0} \\
		P_u \oplus P_v  \ar[r]^{\begin{pmatrix} x^{\ovr{\tilde{t}-u}} \\ y^{\ovr{v-\tilde{t}}} \end{pmatrix}}   & P_{\tilde{t}} 
	}
\end{array}
\]

\textbf{(E.4)} Chain maps with degree  $1$ term zero, stalk complex to double  projective.
\[
\begin{array}{cc}
	\xymatrix@C=50pt@R=30pt{
		P_u \ar[d]\ar@{->}[d]_{\begin{pmatrix} 
			x^{hr}&0
			\end{pmatrix} }   \\
		P_u \oplus P_v  \ar[r]^{\begin{pmatrix} x^{\ovr{{t}-u}} \\ y^{\ovr{v-{t}}} \end{pmatrix}}   & P_{{t}} 
	}
\end{array}
\begin{array}{cc}
	\xymatrix@C=50pt@R=30pt{
		P_v \ar[d]\ar@{->}[d]_{\begin{pmatrix} 
				0&y^{hr}\end{pmatrix} }  \\
		P_u \oplus P_v  \ar[r]^{\begin{pmatrix} x^{\ovr{t-u}} \\ y^{\ovr{v-t}} \end{pmatrix}}   & P_{t} 
	}
\end{array}
\]

The first chain map is well-defined if $hr+\ovr{t-u}\geq p$. Dually, the second chain map is
well-defined if $hr+\ovr{v-t}\geq p$. Maps that have a zero module at one end of the off-diagonal can never be homotopic to zero.

\textbf{(E.5)} Maps between stalk complexes allowed by Lemma \ref{homogeneous map lem} must be well-defined and of course cannot be homotopic to zero, since there is no square.

\item \textbf{All maps internal to $I_0^i \cup J_i \cup I_0^{i+1}$ are linear combinations of compositions of the maps in the catalog}
\end{enumerate}
In this section of the proof, we will consider pairs $P'_t$ and $P'_{\tilde{t}}$ of indecomposable components of the tilting complex of the elementary equivalence determined by the subinterval $I_0$. The irreducible components are of two kinds, stalk complexes when $t \in I_0$ and complexes of length two when $t \in I-I_0$.  We will first determine the general well-defined chain map, not necessarily homogeneous, between this pair of complexes, having constant recourse to Lemma \ref{homogeneous map lem}, which gives the possible maps between the indecomposable projectives $P_i$. We then attempt to write the general map as a linear combination of non-zero, well-defined homogeneous chain maps. Finally, we will decompose each homogeneous chain map as a composition of homogeneous chain maps from the catalog. In Case 2.1, we will deal with all maps between stalk complexes, in Cases 2.2, 2.3, and 2.4 we will deal with maps between two-term complexes, and in Case 2.5, with maps between stalk complexes and two-term complexes. We establish a general notation for maps between two-term complexes  $P'_{t}$ and $P'_{\tilde{t}}$ where $t,\tilde{t} \in J$,   $t\preceq \tilde{t}$.

\[
\begin{array}{cc}
	\xymatrix@C=70pt@R=30pt{
		P'_t: Q_{t} \ar[d]^{\ell_0} \ar@{->}[d]  \ar[r]  & P_t \ar[d]^{\ell_1} \\
		P'_{\tilde{t}}: Q_{\tilde{t}} \ar@/^/[u]^{k_0} \ar[r]  & P_{\tilde{t}} \ar@/^/[u]^{k_1}
	}
\end{array}
\]

\noindent where Lemma \ref{homogeneous map lem} gives the maps $\ell_1$ and $k_1$.,
  
\textbf{Case 2.1}:
First, we check the maps between stalk complexes, where $t,\tilde{t} \in I_0$. In general, we assume that if $t$ ,$\tilde{t}$ in from  the same interval then $t  \preceq \tilde{t}$, as in Def.  \ref{sec:Definition in}.  We get the maps $\ell_0$ and $k_0$ rrom Lemma \ref{homogeneous map lem}.
\[
\begin{array}{cc}
	\xymatrix@C=70pt@R=30pt{
		P_{t} \ar[d]^{\ell_0} \\
		P_{\tilde{t}} \ar@/^/[u]^{k_0}\\
	}
\end{array}
\]

Suppose  $t,\tilde{t} \in I_0^i$ or $t, \tilde{t} \in I_0^{i+1}$.   Each individual map $x^{\ovr{\tilde{t}-t}}(xy)^q$ or $y^{\ovr{\tilde{t}-t}}(xy)^q$ is a composition of maps $\gamma_{\bar{t},\bar{t}+1}$ in the catalog, where $\bar{t}=t,t+1,t+2, \dots,\tilde{t}-1$, followed by $q$ pairs of maps $xy$, also compositions of the $\gamma$ or the maps $\epsilon_t$. In the special cases listed in E.5, one of the maps from E.5 might be needed. The general map is then a linear combination of these homogeneous maps with coefficients $a_q$ or $b_q$.

Suppose that $t  \in I_0^i$, $\tilde{t} \in I_0^{i+1}$, and
 $\ovr{\tt-t}<p$, which implies $j<p$,
the general map given above is a linear combination with coefficients $a_q$ or $b_q$ of the homogeneous maps in the sums, each of which is a composition of the $\gamma_{k,k+1}$, sometimes including $\epsilon_t$, with the jump map from Diagram 3, given by $x^j$ or $y^j$,
followed by $q$ self-maps $xy$, each
  of which is a composition of two maps $\gamma_{t, t+1}$ and $\gamma_{t+1, t}$ or a copy of $\epsilon_t$ maps.
 
 \textbf{Case 2.2}: Single projective, where $Q_t$,$Q_{\tilde{t}}=P_u$. We see from the diagrams in the catalog that such $Q_t,Q_{\tilde{t}}$ occur only when $j>p$, where we have $u+1\preceq  t \preceq \tilde{t}\preceq  u+(j-p)$. By  Lemma \ref{homogeneous map lem}, in a special case where $i=k$ and under the assumption $j>p$, all the non-zero maps $l_0, k_0$ between $P_u \to P_u$ have the form $\sum_{q=0}^{p-1} c_q (xy)^{q}$ and $\sum_{w=0}^{p-1} d_w (xy)^{w}$ respectively.

\[
\begin{array}{cc}
	\xymatrix@C=70pt@R=30pt{
P_u\ar[d]\ar@{->}[d]_{\ell_0} \ar[r]^{x^{\ovr{t-u}}} & P_t \ar[d]^{\ell_1} \\
P_u \ar[r]^{x^{\ovr{\tilde{t}-u}}} & P_{\tilde{t}} 
}
\quad
\xymatrix@C=70pt@R=30pt{
P_u\ar[d]\ar@{->}[d]_{k_0} \ar[r]^{x^{\ovr{\tilde{t}-u}}} & P_{\tilde{t}} \ar[d]^{k_1} \\
P_u \ar[r]^{x^{\ovr{t-u}}} & P_{t}
}
\end{array}
\]
Now we try to simplify these sums by subtracting maps that are homotopic to zero.  Considering the left-hand square,   we are in the case given in Lemma \ref{short} with $s=u$, and conclude that  $\overline{t-u}+\overline{\tilde{t}-t}=\overline{\tilde{t}-u}$. If we have a homogeneous map in degree zero of $(xy)^q$, then the chain map  is well-defined if the map in degree $1$ is  
$x^{\overline{\tilde{t}-t}}(xy)^q$.

When $q \geq \overline{t-u}$ then
we then have a homotopy to zero  
$T:P_t \rightarrow P_u$ given by $(y^{\overline{t-u}}(xy)^{q-\overline{t-u}})$. The composition of the upper horizontal map and $T$ in the upper triangle gives $(xy)^q$ as needed, while the composition of $T$ with the lower horizontal maps, using Lemma \ref{short} gives 
$(xy)^{q-\overline{t-u}}y^{\overline{t-u}}x^{\overline{t-u}+
\overline{\tilde{t}-t}}$. Pulling out the maximal power of $xy$ and moving it to the end using commutativity, we have the desired vertical map $x^{\overline{\tilde{t}-t}}(xy)^q$.
Subtracting off these homotopic-to-zero chain maps, we can assume that $\ell_0=\sum_{q=0}^{\min(p-1,\overline{t-u})}c_q(xy)^q$.

For each $q$ in this range, we subtract off $c_q$ times the composition of maps from the catalog $\delta^q \circ \alpha_{t,\tilde{t}}$, and we get the situation that $\ell_0=0$. For such a map to be well-defined, we must have $0=x^{\overline{t-u}}\circ \ell_1$. These maps are compositions of the maps in E.1, composed with $\delta_t^q$.  Similarly for the map $k_1$ or $\delta_{\tt}^w$. The case of $Q_t, Q_{\tt}=P_v$ is dual.

 \textbf{Case 2.3}:  Double projectives, where $Q_t=Q_{\tilde{t}}=P_u \oplus P_v $. Of all the five cases, this is the most difficult, since we may have $r<p$, in which case we get the full sums in Lemma \ref{homogeneous map lem}.

 Looking at the powers of $x$ that occur in the horizontal maps in Diagram 2, where $p \leq j \leq 2(p-1)$ and recalling that $j=\overline{v-u}$, then $\ovr{(j-p)+1}\leq \overline{t-u}\leq \overline{\tt-u} \leq \ovr{p-1}$. According to Diagram 3, where $2 \leq j \leq p-1$ then $1 \leq \ovr{t-u}\leq \ovr {\tt-u} \leq \ovr{j-1}$.

We recall that $u\in I_0^i$, $v \in I_0^{i+1}$. Assume that $t \preceq \tilde{t} \in J$. Let $a \in P_u$ and $b \in P_v$. From our calculation of maps among projectives in Lemma \ref{homogeneous map lem}, we have that all maps  $P'_t \to P'_{\tilde{t}}$  are of the following form

\[
\begin{array}{cc}
\xymatrix@C=50pt@R=30pt{
P_u \oplus P_v \ar[d]\ar@{->}[d]_{\begin{pmatrix} 
a_{11}(x,y)&a_{12}(x,y)\\
a_{21}(x,y)&a_{22}(x,y)\end{pmatrix}}  \ar[r]^{\begin{pmatrix} x^{\ovr{t-u}}\\ y^{\ovr{v-t}} \end{pmatrix}}   & P_t \ar[d]^{\ell_{1}(x,y)} \\
P_u \oplus P_v  \ar[r]^{\begin{pmatrix} x^{\ovr{\tilde{t}-u}} \\ y^{\ovr{v-\tilde{t}}} \end{pmatrix}}   & P_{\tilde{t}} 
}
\end{array}
\]
\noindent where, by Lemma \ref{homogeneous map lem},
\[
a_{ii}(x,y)=
\sum_{h=0}^{\lfloor  \frac{p-1}{r}\rfloor} \sum_{q_{ii}=0}^{ (p-1-hr)}  a_{q_{ii}}^{iih} \cdot x^{hr}(xy)^{q_{ii}}
\]

\[
+\sum_{h=0}^{\lfloor  \frac{p-1}{r}\rfloor} \sum_{q'_{ii}=0}^{( p-1-hr) }  b_{q'_{ii}}^{iih} \cdot y^{hr}(xy)^{q'_{ii}}
\]

\[
a_{12}(x,y)=
1_{\overline{v-u}<p} \sum_{h=0}^{\lfloor  \frac{p-1-\overline{v-u}}{r}\rfloor} \sum_{q_{12}=0}^{ p-1-(\overline{\tt-t}+hr)}  a_{q_{12}}^{12h} \cdot x^{\overline{v-u}+hr}(xy)^{q_{12}}
\]

\[
+1_{\overline{u-v}<p} \sum_{h=0}^{\lfloor  \frac{p-1-\overline{t-\tt}}{r}\rfloor} \sum_{q'_{12}=0}^{( p-1-(\overline{v-u}+hr)) }  b_q^{12h} \cdot y^{\overline{u-v}+hr}(xy)^{q_{21}}
\]

\[
a_{21}(x,y)=
1_{\overline{u-v}<p} \sum_{h=0}^{\lfloor  \frac{p-1-\overline{u-v}}{r}\rfloor} \sum_{q_{21}=0}^{ (p-1-(\overline{u-v}+hr) )}  a_q^{21h} \cdot x^{\overline{u-v}+hr}(xy)^{q'_{21}}
\]

\[
+1_{\overline{v-u}<p} \sum_{h=0}^{\lfloor  \frac{p-1-\overline{u-v}}{r}\rfloor} \sum_{q_{21}=0}^{ (p-1-(\overline{u-v}+hr)) }  b_q^{21h} \cdot y^{\overline{}+hr}(xy)^{q'_{21}}
\]
\noindent and, finally,
\[
\ell_{1}(x,y)=
1_{\overline{\tt-t}<p} \sum_{h=0}^{\lfloor  \frac{p-1-\overline{\tt-t}}{r}\rfloor} \sum_{q=0}^{( p-1-(\overline{\tt-t}+hr) )}c_q^h \cdot x^{\overline{\tt-t}+hr}(xy)^{q}
\]

\[
+1_{\overline{t-\tt}<p} \sum_{h=0}^{\lfloor  \frac{p-1-\overline{t-\tt}}{r}\rfloor} \sum_{q'=\overline{\tt-t}}^{(p-1-(\overline{t-\tt
	}+hr))}  d_{q'}^h \cdot y^{\overline{t-\tt}+hr}(xy)^{q'}
\]

We  now do diagram chasing on our diagram. 
When calculating the diagonal map using the upper triangle,  we get 
\begin{center}
	$(a,b) \cdot \begin{pmatrix} x^{\ovr{t-u}} \\ y^{\ovr{v-t}} \end{pmatrix} \cdot \ell_1(x,y)=
	(ax^{\ovr{t-u}}+by^{\ovr{v-t}} )\cdot \ell_1(x,y)$ 
\end{center}

\noindent When we use the lower triangle, we have 

\begin{center}
	$(a,b) \cdot {\begin{pmatrix} 
			a_{11}(x,y)&a_{12}(x,y)\\
			a_{21}(x,y)&a_{22}(x,y)\end{pmatrix} }\cdot \begin{pmatrix} x^{\ovr{\tilde{t}-u}} \\ y^{\ovr{v-\tilde{t}}} \end{pmatrix}$ 
\end{center}
\begin{align*}
	=(aa_{11}(x,y)+ba_{21}(x,y),aa_{12}(x,y)+ba_{22}(x,y))\cdot \begin{pmatrix}  x^{\ovr{\tilde{t}-u}} \\ y^{\ovr{v-\tilde{t}}} \end{pmatrix} \\
\end{align*}

The coefficients of $a$ and the coefficients of $b$ must match in these expressions, giving us two equations

\begin{align}
	(a_{11}(x,y) x^{\ovr{\tilde{t}-u}}+a_{12}(x,y)y^{\ovr{v-\tilde{t}}})&=
x^{\ovr{t-u}} \cdot \ell_1(x,y)\\
	(a_{21}(x,y) x^{\ovr{\tilde{t}-u}}+
	a_{22}(x,y)y^{\ovr{v-\tilde{t}}})&=y^{\ovr{v-{t}}}\cdot \ell_1(x,y)
\end{align}

We want to show, under the assumption that this general map is well-defined, that the map is a linear combination of compositions of maps from the catalog. We first try to diagonalize the matrix, using maps from the catalog whose right vertical map is zero.
The first sum in $a_{12}$ is eliminated by subtracting copies of B.1 composed with D.1 and C.2.
When these have been removed, we are left with 
\[
1_{\overline{u-v}<p} \sum_{h=0}^{\lfloor  \frac{p-1-\overline{t-\tt}}{r}\rfloor} \sum_{q'_{12}=0}^{( p-1-(\overline{v-u}+hr)) }  b_q^{12h} \cdot y^{\overline{u-v}+hr}(xy)^{q'_{21}}
\]

The case of the double projective occurs only when $j=\ovr{v-u} \leq 2(p-1)$ and for this second sum to be non-zero we need  $r-j=\ovr{u-v}<p$. Furthermore, if $r>p$, then the upper limit of the first summation is zero, so we must have $h=0$.

 Whenever $q'_{12} \geq \ovr{v-u}$ we can remove 
compositions with B.1, and we will assume that this has been done. We now use the map in C.2 with parameters $b_0, c_0, h_0$, where we assume $c_0=0$ and $h_0=b_0$.  Since this means that $h_0=c_0+b_0$, the map is homotopic to zero, and thus its compositions with 
$\alpha_{t,\tt}$  and with powers of D.2 are also homotopic to zero.  

\[
\begin{array}{cc}
	\xymatrix@C=30pt@R=30pt{
		P_u \oplus P_v \ar[d]\ar@{->}[d]_{\begin{pmatrix} 0 &h_0(xy)^{q-(\ovr{v-t})}x^{\ovr{v-u}}y^{hr} \\0& h_0(xy)^q y^{hr} \end{pmatrix} }  \ar[r]^{\begin{pmatrix} x^{\ovr{t-u}} \\ y^{\ovr{v-t}} \end{pmatrix}}   & P_t \ar[d]^{h_0(xy)^q y^{hr}} \\
		P_u \oplus P_v  \ar[r]^{\begin{pmatrix} x^{\ovr{t-u}} \\ y^{\ovr{v-t}} \end{pmatrix}}   & P_{t} 
	}
\end{array}
\]

Using Lemma \ref{short}, $\ovr{u-v}+\ovr{v-u}=r$ and $\ovr{v-u}=\ovr{v-t}+\ovr{t-u}$, giving us that the upper right-hand entry in the matrix becomes $h_0(xy)^{q+\ovr{t-u}}y^{\ovr{v-u}+(h-1)r}$. Composing with $\alpha_{t,\tt}$  will only multiply by a power of $xy$.  Thus for proper choice of $q'_{12}$, we can use this map to remove the remaining terms of $a_{12}(x,y)$.  A dual procedure for eliminating  $a_{21}(x,y)$ will successfully diagonalize the matrix.

Once the matrix is diagonal, equations 4.1 and 4.2 assume a simple form that allows us to solve immediately for $a_{11}$ and $a_{22}$ whenever the map is well-defined. We separate $\ell_1$ into the two possible sums and treat each separately. We start with the first sum, assuming $\overline{\tt-t}<p$ and solve 

\[
\begin{array}{cc}
	\xymatrix@C=50pt@R=30pt{
		P_u \oplus P_v \ar[d]\ar@{->}[d]_{\begin{pmatrix} 
				a_{11}(x,y)&0\\
				0&a_{22}(x,y)\end{pmatrix} }  \ar[r]^{\begin{pmatrix} x^{\ovr{t-u}}\\ y^{\ovr{v-t}} \end{pmatrix}}   & P_t \ar[d]^{ \sum_{h=0}^{\lfloor  \frac{p-1-\overline{\tt-t}}{r}\rfloor} \sum_{q=0}^{( p-1-(\overline{\tt-t}+hr) )}c_q^h \cdot x^{\overline{\tt-t}+hr}(xy)^{q}} \\
		P_u \oplus P_v  \ar[r]^{\begin{pmatrix} x^{\ovr{\tilde{t}-u}} \\ y^{\ovr{v-\tilde{t}}} \end{pmatrix}}   & P_{\tilde{t}} 
	}
\end{array}
\]
Using 4.1 and the fact that $\ovr{\tt-u}=\ovr{\tt-t}+\ovr{t-u}$, we get 
\[
a_{11}(x,y)= \sum_{h=0}^{\lfloor  \frac{p-1-\overline{\tt-t}}{r}\rfloor} \sum_{q=0}^{( p-1-(\overline{\tt-t}+hr) )}c_q^h \cdot x^{hr}(xy)^{q}
\]
Using 4.2 and the fact that $\ovr{v-t}=\ovr{\tt-t}+\ovr{v-\tt}$
we get
\[
a_{22}(x,y)= \sum_{h=0}^{\lfloor  \frac{p-1-\overline{\tt-t}}{r}\rfloor} \sum_{q=0}^{( p-1-(\overline{\tt-t}+hr) )}c_q^h \cdot x^{hr}(xy)^{\ovr{\tt-t}+q}
\]
The final map obtained from the first sum is a linear combination for various values of $h$ of C.2 composed with D.1 $h$ times, followed by a composition with $\alpha_{t,\tt}$.

Now, assuming that $\ovr{v-t}<p$, we must do something similar with 
the second sum. Instead of writing the chain map again as in the case of the first sum, we will go directly to equations 4.1 and 4.2, using the second sum in $\ell_1$

\begin{align}
	(a_{11}(x,y) x^{\ovr{\tilde{t}-u}})&=
	x^{\ovr{t-u}} \cdot \sum_{h=0}^{\lfloor  \frac{p-1-\overline{t-\tt}}{r}\rfloor} \sum_{q'=\overline{\tt-t}}^{(p-1-(\overline{t-\tt
		}+hr))}  d_{q'}^h \cdot y^{\overline{t-\tt}+hr}(xy)^{q'}\\
	(a_{22}(x,y)y^{\ovr{v-\tilde{t}}})&=y^{\ovr{v-{t}}}\sum_{h=0}^{\lfloor  \frac{p-1-\overline{t-\tt}}{r}\rfloor} \sum_{q'=\overline{\tt-t}}^{(p-1-(\overline{t-\tt
		}+hr))}  d_{q'}^h \cdot y^{\overline{t-\tt}+hr}(xy)^{q'}
\end{align}

Since we cannot simply cancel the power of $x$ in equation (4.3), we must make up the missing power of $x$ from the powers of $xy$, which produces an extra copy of $y^r$.  The final values are 

\begin{align}
	a_{11}(x,y) &=
	 \sum_{h=0}^{\lfloor  \frac{p-1-\overline{t-\tt}}{r}\rfloor} \sum_{q'=\overline{\tt-t}}^{(p-1-(\overline{t-\tt
		}+hr))}  d_{q'}^h \cdot y^{(h+1)r}(xy)^{q'-\ovr{\tt-t}}\\
a_{22}(x,y)&=\sum_{h=0}^{\lfloor  \frac{p-1-\overline{t-\tt}}{r}\rfloor} \sum_{q'=\overline{\tt-t}}^{(p-1-(\overline{t-\tt
		}+hr))}  d_{q'}^h \cdot y^{hr}(xy)^{q'}
\end{align}

The final map obtained from the first sum is a linear combination for various values of $h$ of C.2 composed with D.2 $h$ times, followed by a composition with $\alpha_{t,\tt}$.

\textbf{Case 2.4}: In this case we want to combine single and double projectives. Assume first that $Q_t=P_u$, $Q_{\tilde{t}}=P_u \oplus P_v$. For the map $Q_t \rightarrow P_t$ to be non-zero, we need   $j>p$, as we see from Diagram 2. Since $r \geq j$, for all the maps in Lemma \ref{homogeneous map lem} we have $h=0$.  Since $j=\ovr{v-u}$, there are no direct maps from $P_u$ to $P_v$, so
the most general map is 
\[
\begin{array}{cc}
	\xymatrix@C=50pt@R=30pt{
		P_u  \ar[d]\ar@{->}[d]_{\begin{pmatrix} 
				a_{1}(x,y)&0
			\end{pmatrix} }  \ar[r]^{ x^{\ovr{t-u}}}   & P_t \ar[d]^{\ell_{1}(x,y)} \\
		P_u \oplus P_v  \ar[r]^{\begin{pmatrix} x^{\ovr{\tilde{t}-u}} \\ y^{\ovr{v-\tilde{t}}} \end{pmatrix}}   & P_{\tilde{t}} 
	}
\end{array}
\]
\noindent where $a_1=(xy)^q$ is the only possibility compatible with $r>p$, and then, in order for the map to be well-defined, we must have 
$\ell_1=x^{\ovr{\tt-t}}(xy)^q$. We must have $q \leq p-1$ for the map to be non-zero. This is simply a special case of the map 
$\alpha_{t,\tt}$ composed with $q$ copies of C.1 at the beginning or $q$ copies of $C.2$ at the end.
The case where the map goes from $P_v$ to $P_u \otimes P_v$ is entirely dual because again the diagonal map must be zero.

 Assume now $Q_{\tt}=P_u \oplus P_v $, $Q_{{t}}=P_u$, $j>p$. Again all $h=0$ as in Case 3, and there are no direct maps from $P_u$ to $P_v$. However, in order to give the most general well-defined map we must have the coefficient of the generator of $P_v$ vanishing in the upper triangle as it vanishes in the lower triangle, and for this, we need a high power of $xy$ in $\ell_1(x,y)$, so also in 
$a_1(x,y)$.
\[
\begin{array}{cc}
\xymatrix@C=70pt@R=30pt{
P_u \oplus P_v \ar[d] _{\begin{pmatrix}  a_1(x,y)\\0
\end{pmatrix} }  \ar[r]^{\begin{pmatrix} x^{\ovr{\tt-u}} \\ y^{\ovr{v-\tt}} \end{pmatrix}}   & P_{\tt} \ar[d]^{\ell_1(x,y)} \\
P_v  \ar[r]^{x^{\ovr{t-u}}}  & P_{{t}} 
}
\end{array}
\]
\noindent where $a_1(x,y)=\sum_{q=0}^{p-1}a^1_q(xy)^q$ and $\ell_1=\sum_{q_1=0}^{p-1}c_{q_1}^1y^{\ovr{\tilde{t}-t}}(xy)^{q_1}$. Since $\ovr{v-\tt}+\ovr{\tt-t}=\ovr{v-t}$, in order to have the composition of the upper triangle equal to zero, we must have $q_1 \geq p-\ovr{v-t}$.  We have $\ovr{\tt-u}>\ovr{t-u}$ because $t \preceq \tt$. Now to get the coefficient of the generator of $P_u$ to be equal in both triangles, we require
 $a^1x^{\ovr{t-u}}(xy)^q=c^1x^{\ovr{t-u}}(xy)^{\ovr{\tt-t}+q_1}$. The total power of $x$ will be less than $p$ if $q_1 <p-\ovr{\tt-u}$.  In that case, we take  $q=q_1+\ovr{\tt-t}$ and $a_q^1=c_{q_1}^1$. For $q$ and $q_1$ greater than these critical values, the coefficients can be taken arbitrarily. The case with $P_v$ is dual.
 
 To show that this map can be decomposed as a composition of maps in the catalog, let us start with the case $t=t_0=u+(j-p)$ and $\tt=u+(j-p)+1$.  In this case,
 we have the map in Diagram 2 designated by $\mu_u$, where $q_1=0$ and $\ovr{\tt-t}=1$. We can multiply by a constant to get the general $a_1(x,y)$ and we can compose with maps of type C.1 or C.2 to get higher values of $q_1$. To get more general cases, we use compositions with $\alpha_{t_0,t}$  to get $q=\ovr{t_0+1-t}$ while still having $q_1=0$, and then compose with C.1 and C.2. Finally, to get a general value of $\tt$, we compose with $\alpha_{\tt.t_0+1}$, which increases $q$ to $\ovr{\tt-t}$ and then compose with an additional $q_1$ copies of C.1 or C.2.

\textbf{Case 2.5}: It remains to consider the cases from $P_w' \to P_t'$ or $P_t' \to P_w'$ where $w \in I_0$, $t \in J$.
If $Q_t = P_u$, then $j>p$, so we do not have to worry about the sums in Lemma \ref{homogeneous map lem}.

\[
\textbf{(5.a)} 
\begin{array}{cc}
	\xymatrix@C=70pt@R=30pt{
		P_w\ar[d]\ar@{->}[d]^{ax^{\ovr{u-w}}(xy)^q}  \\
		P_u \ar[r]^{x^{\ovr{t-u}}} & P_{t} 
	}
\end{array}
\textbf{(5.b)} 
\begin{array}{cc}
	\xymatrix@C=70pt@R=30pt{
		P_u \ar[d]^{ay^{\ovr{u-w}}(xy)^q}\ar@{->}[r]^{x^{\ovr{t-u}}} & P_{t} \\
		P_w  	
	}
	
\end{array}
\]

The map \textbf{(5.a)} is non-zero if $\ovr{u-w}+q<p$ and well-defined if $\ovr{t-w}+q \geq p$.  There is no possibility of a non-zero power of $x$ reaching $P_v$ because $j=\ovr{v-u}>p$. 
The map \textbf{(5.b)} is non-zero if $\ovr{u-w}+q < p$ and is automatically well-defined. It is not homotopic to zero as long as $q < \ovr{t-u}$.
It is irreducible when $q=0$. There is a dual case where we replace $x$ by $y$ and $u$ by $v$.

\[
\textbf{(5.c)} 
\begin{array}{cc}
	\xymatrix@C=70pt@R=30pt{
		P_w\ar[d]\ar@{->}[d]^{(a_1,a_2)}  \\
		P_u \oplus P_v\ar[r]^{\begin{pmatrix} x^{\ovr{t-u}} \\ y^{\ovr{v-t}} \end{pmatrix}} & P_{t} 
	}
\end{array}
\textbf{(5.d)} 
\xymatrix@C=70pt@R=30pt{
	P_u \oplus P_v  \ar[d] \ar@{->}[d]^{x^{\ovr{u-w}}}  \ar[r]^{\begin{pmatrix} x^{\ovr{t-u}} \\ y^{\ovr{v-t}} \end{pmatrix}} & P_{t} \\
	P_w
}
\]

The map \textbf{5.c} will be non-zero when one of the summands in $a_1$ or $a_2$ will be non-zero, where
\[
a_{1}(x,y)=
\sum_{h=0}^{\lfloor  \frac{p-1}{r}\rfloor} \sum_{q_{1}=0}^{ (p-1-hr)}  a_{q_{}}^{1h} \cdot x^{\ovr{u-w}+hr}(xy)^{q_{}}
\]

\[
+\sum_{h=0}^{\lfloor  \frac{p-1}{r}\rfloor} \sum_{q'_{ii}=0}^{( p-1-hr) }  b_{q'_{ii}}^{iih} \cdot  x^{\ovr{u-w}} y^{hr}(xy)^{q'_{ii}}	
\]

\[
a_{2}(x,y)=
1_{\overline{v-u}<p} \sum_{h=0}^{\lfloor  \frac{p-1-\ovr{v-u}}{r}\rfloor} \sum_{q_{12}=0}^{ p-1-(\overline{\tt-t}+hr)}  a_{q_{12}}^{12h} \cdot x^{\overline{v-u}+\ovr{u-q}+hr}(xy)^{q_{12}}
\]

\[
+1_{\overline{u-v}<p} \sum_{h=0}^{\lfloor  \frac{p-1-\overline{t-\tt}}{r}\rfloor} \sum_{q'_{12}=0}^{( p-1-(\overline{v-u}+hr)) }  b_q^{12h} \cdot y^{\overline{u-v}+\ovr{u-w}+hr}(xy)^{q_{21}}
\]

It is well-defined only when the composition of the vertical and horizontal arrows is zero.

In summary, we have found all possible maps between $P_t'$ and $P_{\tilde{t}}'$ where $u<t,\tilde{t}<v$, and shown how to factor them using intra-arc maps from the catalog.
\end{proof}
\section{An example of intra-arc  maps$:(C_5 \times C_5) \rtimes C_4$}
\label{4_6}

We return to an example we mentioned before in Example \ref{example 554_2}  with $I=\{0, 1, 2,  3\}$. According to what we did in $\S$ \ref{The structure of projective modules} and Lemma \ref{homogeneous map lem}, the composition factors of the projective modules form diamonds. This case gives us an example of the extra maps that can appear when $r<p$.
We choose the projective $P_2$  arbitrarily and make an elementary equivalence with $I_0=\{0,1,3\}$. We used a mutation for our first example because the resulting tilting complex is simpler. This gives a tilting complex $T'=\bigoplus T'_j$ as in \S \ref{sec:ELEMENTARY EQUIVALENCES}
\[
\begin{array}{ccccccccccccccc}
T_0': &  & 0 & \rightarrow & P_{0} & \rightarrow & 0\\
T_1': &  & 0 & \rightarrow & P_{1}  & \rightarrow & 0\\
T_2': &  & 0 & \rightarrow & P_{1} \oplus P_{3} & \rightarrow & P_{2} & \rightarrow & 0\\
T_3': &  & 0 & \rightarrow & P_{3} &\rightarrow & 0
\end{array}
\]
In order to compute the new projective module $P'_2$ in $\End_{D^{b}(A)}(T',T')$, we want to calculate a basis for $\Hom_{D^{b}(A)}(T',T'_{i})$, $i=0,\dots,3$ , by finding all the maps between all the projectives, as already done in the previous Proposition \ref{internal}. Since $r<p$ we have a reducible maps involving $x^4$ from $P_i \to P_i$ and $y^4$ from $P_i \to P_i$ for $\{ i=0 ,1,3 \}$. These maps commute with each other and their product is the socle of the corresponding projective. There are also reducible loops $xy$ obtained by going to a neighbor by $x$ and returning by $y$, or vice versa. Then $(xy)^4$ also gives the socle.

Since $J=\{2\}$, $j=2$ and $p=5$ we are in the case $j \leq p-1$ and the adjacent maps come from Diagram 3 as below, where $u=1$, $v=3$ all the complexes in the middle section contract to a single line. 

\[\xymatrix@C=70pt@R=30pt{
\\
P_{0} \ar@/^/[d]^x 	\\
P_1  \ar@/^/[u]^y  \ar@(l,l)[d]_{\epsilon_u=\begin{pmatrix}xy &-x^2\end{pmatrix}}	\\
P_1 \oplus P_3 \ar[d]^{\pi_3=\begin{pmatrix} 0\\ id \end{pmatrix} }  \ar[u]_{\pi_1=\begin{pmatrix} id\\ 0 \end{pmatrix} } \ar[r]^{\binom{x^{}}{y}}    &  P_2 	\\
P_3   \ar@(l,l)[u]^{\epsilon_v=\begin{pmatrix} -y^2&xy \end{pmatrix} }\ar@/^/[d]^x \\
P_{0} \ar@/^/[u]^y
}
\]
\[\xymatrix@C=70pt@R=30pt{
P_1  \ar@/^/[d]^{y^2}	\\
P_3  \ar@/^/[u]^{x^2}\\
}
\]	

We compute the quiver, given in Fig. \ref{462_f}.
\begin{figure}
	\[
	\xymatrix@C=100pt@R=30pt{
		& P'_1 \ar@/^/[dddd]^{x^2} \ar@_[ddr] |{\tau_1} \ar@/^/[ddr]|{\epsilon_1}  \ar@/^/[ddl]^y    \\
		&&\\
		P'_0 \ar@/^/[uur]^x \ar@/^/[ddr]^y  & &  P'_2 \ar@(u,ur)^{\iota_1}  \ar@(ur,r)^{\iota_3} \ar@(r,dr)^{\eta_1}  \ar@(dr,d)^{\gamma} \ar@(d,dl)^{\eta_3}
		\ar@/^/[uul]^{\pi_1} \ar@/^/[ddl]^{\pi_3} \\
		&&  \\ 
		& P'_3 \ar@/^/[uuuu]^{y^2}
		\ar@/^/[uur]|{\epsilon_3}  \ar@_[uur] |{\tau_3} \ar@/^/[uul]^x     
	}
	\]
	\caption{}
	\label{462_f}
\end{figure}
We now go through the catalog.  From \textbf{A}, Diagram 3, we have various maps shown between $P'_0, P'_1, P'_3$. For chain maps involving $P'_2$, we let $\ell_0$ be the term in degree zero and $\ell_1$ be the term in degree one. We start with  maps between $P'_2$ and $P'_1,	P'_3$ given by 

\begin{align*}
	\epsilon_1\phantom{^2}  &: P'_1 \to P'_2, \ell_1=0.
	&
	\epsilon_3\phantom{^2}  &: P'_3 \to P'_2, \ell_1=0.
	\\
	&\ell_0=\left(\begin{array}{rr}
		xy, &-x^2 
	\end{array}\right)
	&&\ell_0=\left(\begin{array}{rr}
		-y^2, & xy
	\end{array}\right)
	\\
	\\
	\pi_1\phantom{^2}  &:P'_2 \to P'_1, \ell_1=0.
	&
	\pi_3\phantom{^2}  &:P'_2 \to P'_3, \ell_1=0.
	\\
	&\ell_0=\left(\begin{array}{rr}
		id \\ 0
	\end{array}\right)
	&&\ell_0=\left(\begin{array}{rr}
		0 \\id
	\end{array}\right)
\end{align*}

The self-maps in $B$ are in this case reducible, The map in B.1 is the composition $\epsilon_1\circ \pi_1$ and the map in B.2 is $\epsilon_3\circ \pi_3$. Since the sum is homotopic to zero, we get a relation that $\epsilon_1\circ \pi_1+\epsilon_3\circ \pi_3=0$. Denote by $\gamma$ the map from C.2 with $q=1$.  It satisfies a relation $\gamma^5=0$. It is not reducible because of the contraction of the middle section of Diagram 3 to a single complex $P'_3$, which means that we cannot go back and forth using $x$ and $y$.

 From D.1 and D.2 we get self-maps $\eta_1,\eta_3$ and from D.3 we get two  self-maps $\iota_1, \iota_3$ on $P'_2$, as below
\begin{align*}
	\\
	\eta_1\phantom{^2} &: P'_2 \to P'_2, \ell_1=x^4.	
	&
	\eta_3\phantom{^2} &: P'_2 \to P'_2, \ell_1=y^4.
	\\
	&\ell_0= \left(\begin{array}{rr}
		x^4&0\\
		0&x^4
	\end{array}\right)
	&&\ell_0=\left(\begin{array}{rrrrrr}
		y^4&0\\
		0&y^4 
	\end{array}\right)
	\\
	\iota_1\phantom{^2}  &: P'_2 \to P'_2, \ell_1=x^4.
	&
	\iota_3\phantom{^2} &: P'_2 \to P'_2, \ell_1=y^4.
	\\
	&\ell_0=\left(\begin{array}{rrrrrr}
		0&0\\
		x^3y&0
	\end{array}\right)
	&&\ell_0=\left(\begin{array}{rrrrrr}
		0&xy^3\\
		0&0
	\end{array}\right)
	\\	
\end{align*}
\noindent where  $\eta_1 $ is the difference between the map in D.1 and the first map in E.3, and $\eta_3$ is the difference between  D.2 and the second map in E.3, while  $\iota_1, \iota_3$ are the special cases in D.3. We also get a relation, because $\eta_1 \circ \eta_3 = \eta_3 \circ \eta_1 = \gamma^4$.
\noindent 
In addition, from E.4, we have maps from $P'_i$ to $P'_2$:
\begin{align*}
	\\
	\tau_3\phantom{^2}  &:P'_3 \to P'_2, \ell_1=0.
	&
	\tau_1\phantom{^2}  &:P'_1 \to P'_2, \ell_1=0.
	\\
	&\ell_0= \left(\begin{array}{rr}
		0 &y^4
	\end{array}\right)
	&&\ell_0= \left(\begin{array}{rr}
		x^4 &0
	\end{array}\right)
	\\
\end{align*}

Besides the relations already listed, we can derive  relations from $x^5=0$, $y^5=0$ and $xy=yx$, and in addition, we also get $(\gamma)^5=0$.
We believe that these are not yet all relations.  There is a loop 
$\epsilon_3 \circ y^2 \circ \pi_1$ and its dual, which may combine with some of the self-maps to give a relation.
\renewcommand{\refname}{REFERENCES}


\begin{thebibliography}{999}
\bibitem[C]{C} J. H. Conway, R. T. Curtis,  S. P. Norton, R. A. Parker, R. A. Wilson, The ATLAS of Finite  Groups. Maximal Subgroups and Characters of Simmple Groups, Clarendon Press, Oxford (1985).
\bibitem[CR]{CR} C. Curtis and I.  Reiner, {Representation Theory  of Finite Groups and Associative Algebras}, American Mathematical Society, Chelsea Publishing, (1962).
\bibitem[GS]{GS} M. Gerstenhaber, M.  Schaps, \textit{ The modular version of Maschke's theorem for normal abelian p-Sylows   } Journal of Pure and Applied Algebra 109 (1996),  257-264.
\bibitem[H]{H}D. Happel, \emph{Triangulated Categories in the Representation Theory of Finite Dimensional Algebras}, London Mathematical Society Lecture Note Series, 119. Cambridge University Press, Cambridge, (1988).
\bibitem [K]{K} B. K\"ulshammer, \emph{Crossed products and blocks with normal defect groups}, Comm. Algebra \textbf{13} (1985), no. 1, 147-168. 
\bibitem[KK]{KK} S. Koshitani and N. Kunugi, \emph{Brou\'e conjecture holds for principal $3$-blocks with elementary abelian defect group of order $9$}, Journal of Algebra, \emph{248}, (2002), 575-604.
\bibitem[KZ2]{KZ2}S. König nd A. Zimmermann,\emph{ Derived equivalences for group rings}. With contributions by Bernhard Keller, Markus Linckelmann, Jeremy Rickard and Raphaël Rouquier. Lecture Notes in Mathematics, 1685. Springer-Verlag, Berlin, (1998).
\bibitem [Lan] {Lan} P. Landrock, Finite Group Algebras and their Modules, London Mathematical Society Lecture Note Series 84, Cambridge University Press, Cambridge, (1983), 78-80.
\bibitem[M]{M} J. McKay, \textit{Graphs singularities and finite groups}, Proc. Symp. Pure Math.37 (1980), 183-186.
\bibitem[O]{O} T. Okuyama, \emph{Some examples of derived equivalent blocks of finite groups,} (Japanese)  Proceedings of Conference on Group Theory, (1996):  English translation available as a preprint.
\bibitem[R1]{R1} J. Rickard, \emph{Morita theory for derived categories},  Math. Soc., vol 39, no. 2, (1989), 436--456.
\bibitem[R2]{R2}J. Rickard, Derived equivalence and stable equivalence,  \emph{J. Pure and Appl. Alg.} 61, 303-317, (1989).
\bibitem[RR]{RR} I. Reiten and C. Riedtmanm, \emph{Skew group algebras in the representation theory of artin algebras}, J. Algebra, \textbf{92} (1985), 224-282.
\bibitem[S]{S} M. Schaps, \textit{A modular version of Maschke's theorem for groups with cyclic
p-Sylow subgroup}, J. of Algebra, 163 (1994), 623-635.
\bibitem[SSS]{SSS} M. Schaps, D. Shapira and O. Shlomo, \textit{Quivers of blocks with normal defect group}, American Mathematical Society Proc. Symp. Pure Math.
63 (1998), 497-510.
\bibitem[SZ2]{SZ2}M. Schaps, E. Zakay-Illouz,\emph{Pointed Brauer trees}. J. Algebra 246, no. 2, (2001), 647--672.
\bibitem[SZ]{SZ} M. Schaps and Z. Zvi,  \emph{Mutations and pointing for Brauer trees algebras}, arXiv 1606.04341, (2016), Osaka J. Math. \textbf{57},(2020), 689-709.
\bibitem [Sw]{Sw} W. Schwarz, \emph{Die struktur modularer gruppenringe endlicher gruppen der p-L\"ange 1} , J. Algebra, \textbf{60} (1979), 51-75.
\bibitem[W]{W}C.A. Weibel, {An Introduction to Homological Algebra}, Cambridge Studies in Advanced Mathematics, 38. Cambridge University Press, Cambridge, (1994).
\end{thebibliography}
 \end{document}